   \def\MR#1{}
\newcommand{\R}{\mathbb{R}}
\newcommand{\Z}{\mathbb{Z}}
\renewcommand{\H}{\mathbb{H}}
\renewcommand{\P}{\mathcal{P}}
\newcommand{\PP}{\mathtt{P}}
\newcommand{\vol}{{\rm vol}}
\newcommand{\voct}{{v_{\rm oct}}}
\newcommand{\vtet}{{v_{\rm tet}}}
\newcommand{\LL}{\mathcal{L}}
\newcommand{\K}{\upkappa}
\renewcommand{\setminus}{{\smallsetminus}}
\newcommand{\volbp}{{\rm vol}^{\lozenge}}
\newcommand{\G}{\mathcal G}
\newcommand{\vg}{\nu^{\lozenge}(\mathcal G)}
\newcommand{\vgs}{\nu^{\lozenge}(\G_s)}
\newcommand{\zfd}{z^{\rm fd}}
\newcommand{\vbar}{\overline{\nu}}
\newcommand{\volp}{{\rm vol}^{\perp}}
\newcommand{\gb}{G^b}
\newcommand{\vgb}{\vbar(\G)}
\newcommand{\vgbs}{\vbar(\mathcal G_s)}
\newcommand{\Gb}{\mathcal G^{b}}
\newcommand{\m}{\mathrm{m}}
\newtheorem{theorem}{Theorem}
\newtheorem{corollary}[theorem]{Corollary}
\newtheorem{lemma}[theorem]{Lemma}
\newtheorem{prop}[theorem]{Proposition}
\newtheorem{conjecture}[theorem]{Conjecture}
\newtheorem*{namedtheorem}{\theoremname}
\newcommand{\theoremname}{testing}
\theoremstyle{definition}
\newtheorem{definition}[theorem]{Definition}
\newtheorem{question}[theorem]{Question}
\newtheorem{remark}[theorem]{Remark}
\title[Geometric bounds for spanning tree entropy of planar lattice graphs]
{Geometric bounds for spanning tree entropy\\ of planar lattice graphs}
\author[A.\ Champanerkar]{Abhijit Champanerkar}
\address{Department of Mathematics, College of Staten Island \& The Graduate Center, City University of New York, New York, NY}
\email{abhijit@math.csi.cuny.edu}
\author[I. \ Kofman]{Ilya Kofman}
\address{Department of Mathematics, College of Staten Island \& The Graduate Center, City University of New York, New York, NY}
\email{ikofman@math.csi.cuny.edu}
\thanks{May 8, 2025}
\begin{document}

\maketitle

\begin{abstract}
  We prove infinitely many cases of conjectured sharp upper and lower
  bounds for the spanning tree entropy of any planar lattice graph.  These
  bounds come from volumes of associated hyperbolic alternating links,
  right-angled hyperbolic polyhedra and hyperbolic regular ideal
  bipyramids.  For many planar lattice graphs, we show these bounds are easy
  to compute and provide excellent numerical estimates for the
  spanning tree entropy.
\end{abstract}

\section{Introduction}

Let $\G$ be a connected locally finite planar biperiodic graph, which
is invariant under the action of a lattice $\Lambda\cong\Z^2$.
We will call $\G$ a planar lattice graph.  Consider the exhaustive
nested sequence of finite connected planar graphs $\Gamma_n$ given by
$n\times n$ copies of the $\Lambda$-fundamental domain of $\G$.  Let
$\tau(\Gamma_n)$ be the number of spanning trees of $\Gamma_n$, and
let $V\Gamma_n$ be its vertex set.  The \emph{spanning tree entropy
  of} $\G$ is defined as
$$ z_{\G} = \lim_{n\to\infty}\frac{\log\tau(\Gamma_n)}{|V\Gamma_n|}.$$

In 1973, Temperley \cite{Temperley} computed $z_{\G}$ for the square
grid using a bijection between spanning trees and dimer coverings
(perfect matchings).  Temperley's bijection has been extended in many
ways to compute $z_{\G}$ in more general settings (see, e.g.,
\cite{KOS, KPW}).  For planar lattice graphs, $z_{\G}$ is sometimes
exactly computable using its quotient graph on the torus (see, e.g.,
\cite{ckl:mm_voldet, KOS, Lyons}).

A surprising fact about $z_{\G}$ for many planar lattice graphs is that its
value is closely related to hyperbolic geometry.  Let $\vtet\approx
1.01494$ be the volume of the hyperbolic regular ideal tetrahedron, and
$\voct\approx 3.66386$ be the volume of the hyperbolic regular ideal
octahedron.  If $\G_{\triangle},\,\G_{\square}$ and $\G_{\hexagon}$
denote the regular triangular, square and hexagonal lattice graphs, then
$$ 2\pi\,z_{\G_{\triangle}} =  10\vtet, \qquad 2\pi\,z_{\G_{\square}} =   2\voct, \qquad 2\pi\,z_{\G_{\hexagon}} =  5\vtet.$$
See, e.g., \cite[Theorems~12, 13]{ckl:mm_voldet}; see Section~\ref{sec:rak} for another proof.
In Conjecture~\ref{conj:vol-entropy} below, we propose upper and lower
bounds for $z_{\G}$ for any planar lattice graph $\G$.  The volume of an
associated hyperbolic link provides the lower bound, which is sharp
for the regular planar lattice graphs.

There is a well-known correspondence between alternating link diagrams
and planar graphs: If the faces of the link diagram are checkerboard
colored, the {\em Tait graph} is the planar graph for which a vertex
is assigned to every shaded region and an edge to every crossing of
the link diagram.  Using the other checkerboard coloring yields the
dual graph.  Conversely, the medial graph of any planar graph (or its
dual) is the projection graph of an alternating link diagram.

Similarly, a planar lattice graph $\G$ is the Tait graph of a
\emph{biperiodic alternating link} $\LL$ in $\R^2\times I$.  Then
$G=\G/\Lambda$ is a graph on the torus $T^2$, which is the Tait graph
of an alternating link diagram on $T^2$.  Let $L$ be the link in the
thickened torus $T^2\times I$, such that $L=\LL/\Lambda$.  Let $G^*$
be the dual graph of $G$ on $T^2$.  Let $VG, EG, FG$ denote the sets
of vertices, edges and faces of $G$.  If $G$ and $G^*$ are
$2$-connected and their faces are topologically disks on the torus,
then $(T^2\times I)-L$ is a complete finite-volume hyperbolic
$3$--manifold \cite[Theorem~4.2]{HowiePurcell}.
Moreover, the crossing number $c(L)=|EG|$ is minimal among all
toroidal diagrams of $L$ \cite{Adams:crossings}.
Using its hyperbolic volume, we define
$$ \vol(G)=\vol((T^2\times I)-L) \quad \text{and} \quad \vol(\G)=\frac{\vol(G)}{|VG|} \quad \text{and} \quad \vgb=\frac{|EG|\voct}{|VG|}.$$

\begin{conjecture}\label{conj:vol-entropy}
$$ \vol(\G) \ \leq \ 2\pi\,z_{\G} \ \leq \ \vgb.$$
\end{conjecture}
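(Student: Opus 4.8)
The plan is to prove the two inequalities in Conjecture~\ref{conj:vol-entropy} separately, since they have genuinely different flavors. For the upper bound $2\pi\,z_{\G}\leq\vgb = |EG|\voct/|VG|$, I would exploit the fact that the spanning tree entropy of a biperiodic graph can be expressed via a Mahler measure / determinant of the discrete Laplacian on the torus quotient $G$. Specifically, writing the weighted adjacency operator on $G=\G/\Lambda$ and using Fourier analysis on $\Z^2$, one obtains a closed formula for $z_\G$ as an integral over the torus of characters $(\theta_1,\theta_2)$ of a log-determinant of the associated periodic Laplacian (see \cite{Lyons, KOS}). The idea is then to bound this integral crossing-number-theoretically: each edge of $G$ contributes at most $\voct/|VG|$ (normalized per vertex) to $2\pi\,z_\G$. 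I would isolate the contribution of a single edge to the log-determinant and compare it with the volume $\voct$ contributed by the corresponding ideal octahedron in the fully augmented decomposition of $L$.

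\begin{proof}[Proof sketch]
For the lower bound $\vol(\G)\leq 2\pi\,z_\G$, the plan is geometric rather than combinatorial. Since $(T^2\times I)-L$ is hyperbolic with volume $\vol(G)$, I would use a guts/angle-structure estimate: any complete hyperbolic metric on the link complement realizes the maximal volume, so it suffices to produce an angle structure or spine whose volume lower-bounds $\vol(G)$ by a quantity tied to the spanning-tree count of $G$. Concretely, I would compare the hyperbolic volume with the asymptotic growth of $\log\tau(\Gamma_n)$ through the known identity relating determinants of torus Laplacians and hyperbolic volumes for alternating links; the normalization by $|VG|$ makes both sides densities per fundamental domain. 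The strategy is to pass to the exhaustion $\Gamma_n$, apply subadditivity of $\log\tau$ under gluing of fundamental domains, and match the volume density $\vol(\G)$ against the entropy density in the limit $n\to\infty$ via a Følner-type averaging argument.

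The hard part will be the lower bound, and in particular controlling how the hyperbolic volume $\vol(G)$ relates to the Laplacian determinant uniformly across all planar lattice graphs rather than just the regular cases $\G_\triangle,\G_\square,\G_\hexagon$ where equality is known. The obstruction is that $\vol(G)$ is not a local quantity in the graph in any obvious way, so the per-vertex comparison must be done through a global decomposition (e.g.\ into ideal bipyramids or the right-angled polyhedral pieces mentioned in the abstract) whose volumes admit sharp lower estimates. I expect to establish the two bounds only for the infinite families where such a decomposition is explicit and the Laplacian determinant factors favorably, which matches the abstract's claim of proving ``infinitely many cases'' rather than the full conjecture. Extracting the clean per-edge octahedral bound for the upper inequality, and the per-vertex volume-density bound for the lower inequality, in a form valid across an infinite family, is the crux.
\end{proof}
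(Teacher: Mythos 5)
The statement you are asked to prove is a conjecture; the paper does not prove it in general, and even restricted to the ``infinitely many cases'' the paper does establish, your sketch has concrete gaps. For the upper bound, your key step --- that ``each edge of $G$ contributes at most $\voct$'' to $2\pi\,\zfd_G$ --- is not a lemma you can extract from the Laplacian/Mahler-measure formula; it \emph{is} the open conjecture (the paper attributes it to Kenyon, \cite[Conjecture~2.3]{ckp:gmax}), and no mechanism is offered for isolating a single edge's contribution to the log-determinant integral and comparing it to an octahedron. For the lower bound $\vol(\G)\leq 2\pi\,z_{\G}$ your argument runs in the wrong direction: an angle structure or spine whose volume lower-bounds $\vol(G)$ gives an inequality of the form $X\leq\vol(G)$, whereas you need an \emph{upper} bound on $\vol(G)$ by a spanning-tree quantity. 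There is also no ``known identity relating determinants of torus Laplacians and hyperbolic volumes for alternating links''; equality holds only for $\G_{\triangle},\G_{\square},\G_{\hexagon}$, and the paper proves it there (Theorem~\ref{thm:ortho-iso}) via orthogonal duality, isoradiality, and de Tili\`ere's formula for the critical isoradial dimer model --- not via a general identity one can invoke.

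The paper's actual route to the provable cases is different and worth contrasting with yours. It first shows $0<\vol(G)\leq\volbp(G)+\volbp(G^*)$ (Proposition~\ref{prop:vol}), by combining the ideal tetrahedra of the triangulation of $(T^2\times I)-L$ into bipyramids over the faces of $G$ and $G^*$ and applying Adams's result that regular ideal bipyramids maximize volume. This reduces Conjecture~\ref{conj:vol-entropy} to the purely combinatorial/analytic inequalities $\vg\leq 2\pi\,z_{\G}\leq\vgb$ (Proposition~\ref{prop:eq1}), which are then propagated through three explicit constructions (parallel edges, truncation of $3$-regular lattice graphs, medial graphs) using Teufl--Wagner's closed formulas for how $z_{\G}$ transforms and Lemma~\ref{lemma:Bns} controlling $\vol(B_{ns})-\vol(B_n)$. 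Note finally that the intermediate inequality $\volbp(G)+\volbp(G^*)\leq 2\pi\,\zfd_G$ is \emph{false} for the $3^3$-$4^2$ lattice graph (Section~\ref{sec:counterexample}), so any strategy that tries to prove the lower bound of the conjecture uniformly through a bipyramid or polyhedral decomposition of the kind you describe cannot succeed for all planar lattice graphs; a genuinely different idea is needed even to formulate a uniform attack.
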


Conjecture~\ref{conj:vol-entropy} lies at the intersection of
hyperbolic geometry, number theory, probability and graph theory.  In
this paper, we explain the context and provide numerical evidence for
Conjecture~\ref{conj:vol-entropy}, and we prove infinitely many cases
of Conjecture~\ref{conj:vol-entropy}. Most of the results proved in
this paper address the lower bound in
Conjecture~\ref{conj:vol-entropy}.  The hyperbolic volume of a link
can be computed numerically using the computer program SnapPy
\cite{snappy}.  Finding the spanning tree entropy of a planar lattice
graph involves laborious computations and many recent published papers
just compute examples.  Finding exact values for both quantities is
far more difficult, and when possible requires advanced techniques
from number theory; see e.g., \cite{ckl:mm_voldet}.  Instead, below we
define several new geometric invariants associated to planar graphs
and lattice graphs that are easier to compute, are numerically very
close to the spanning tree entropy in many examples, and are easier to
study asymptotically.

In Section~\ref{sec:bipyramid}, we define the bipyramid volume $\vg$ of a
planar lattice graph $\G$, and we show
$$ 0 \ < \ \vol(\G) \ \leq \ \vg.$$
If $\vg \leq 2\pi\,z_{\G}$, then $\vol(\G) \leq 2\pi\,z_{\G}$, as in
Conjecture~\ref{conj:vol-entropy}.  The bipyramid volume is easier to
compute than $\vol(\G)$ or $z_{\G}$, and we show for 16 planar
lattice graphs that it provides both a lower bound and an excellent estimate
for $z_{\G}$.  We also show that for one planar lattice graph, the bipyramid
volume is not a lower bound for $z_{\G}$, which contradicts
\cite[Conjecture~1]{ckl:mm_voldet}.  This exceptional planar lattice graph
still satisfies Conjecture~\ref{conj:vol-entropy}.

In Section~\ref{sec:planar}, we recall the Vol-Det Conjecture
\cite[Conjecture~1.10]{ckp:gmax} from hyperbolic knot theory, which is
that for any alternating hyperbolic link $K$,
$$ \vol(K)<2\pi\,\log\det(K).$$  We present strict inequalities for
finite alternating links that extend to the sharp inequalities for
biperiodic alternating links in Conjecture~\ref{conj:vol-entropy}.
Namely, we define the volume and bipyramid volume for a finite planar
graph, and we state Conjecture~\ref{conj:planar}, which gives similar
geometric bounds for the number of spanning trees of finite planar
graphs.  For finite planar graphs that asymptotically converge to the
planar lattice graph $\G$, their bipyramid volumes converge to that of
$\G$.  We show that whenever the bipyramid volume of $\G$ is a lower
bound for $z_{\G}$ with a strict inequality, we obtain infinitely many
finite planar graphs converging to $\G$ that satisfy
Conjecture~\ref{conj:planar}, which provide infinite families of
alternating links that satisfy the Vol-Det Conjecture.

In Section~\ref{sec:infinite}, we apply another useful property of the
bipyramid volume, that its logarithmic growth rate is similar to that
of $z_{\G}$.  We prove Conjecture~\ref{conj:vol-entropy} for
infinitely many cases using three different ways to obtain infinite
families of planar lattice graphs: by inserting parallel edges, by
truncating any $3$-regular planar lattice graph, and by taking the medial
graph of a $3$-regular planar lattice graph.  The bipyramid volume provides
a lower bound for $z_{\G}$ in all of these infinitely many cases.
Previously, in \cite{ckl:mm_voldet}, it was proved that the bipyramid
volume provides a lower bound for $z_{\G}$ for six biperiodic
alternating links using rigorous computations for the Mahler measures
of the corresponding 2-variable polynomials.  In \cite{Gan1}, four
other lower bounds were proved using methods from graph theory in
\cite{TeuflWagner}.

In Section~\ref{sec:rak}, we define another geometric invariant of a
planar lattice graph $\G$, the volume of an associated hyperbolic
right-angled polyhedron, which we call the right-angled volume
$\volp(\G)$.  If $\G$ and $\G^*$ satisfy a geometric condition called
orthogonal duality, then
$$ \volp(\G) \ \leq \ \vol(\G) \ \leq \ \vg.$$
Thus, Conjecture~\ref{conj:vol-entropy} implies $\volp(\G)\leq 2\pi\,z_{\G}$,
and we show that $\volp(\G)$ is exactly computable using the
local geometry of $G$.
The three regular planar lattice graphs satisfy both isoradiality and
orthogonal duality, which we exploit using the isoradial dimer model
to prove that the lower bound in Conjecture~\ref{conj:vol-entropy}
holds with equality for these lattice graphs.

\subsection*{Acknowledgements}
We thank Hong-Chuan Gan for stimulating discussions.
We also acknowledge support from the Simons Foundation and PSC-CUNY.

\begin{figure}
\begin{tabular}{cc}
\includegraphics[height=1.45in]{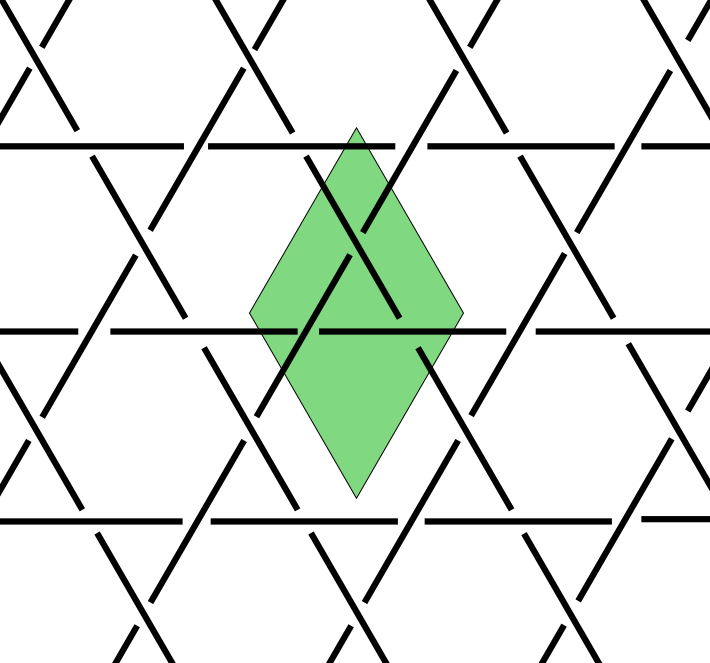} &
\qquad
\includegraphics[height=1.45in]{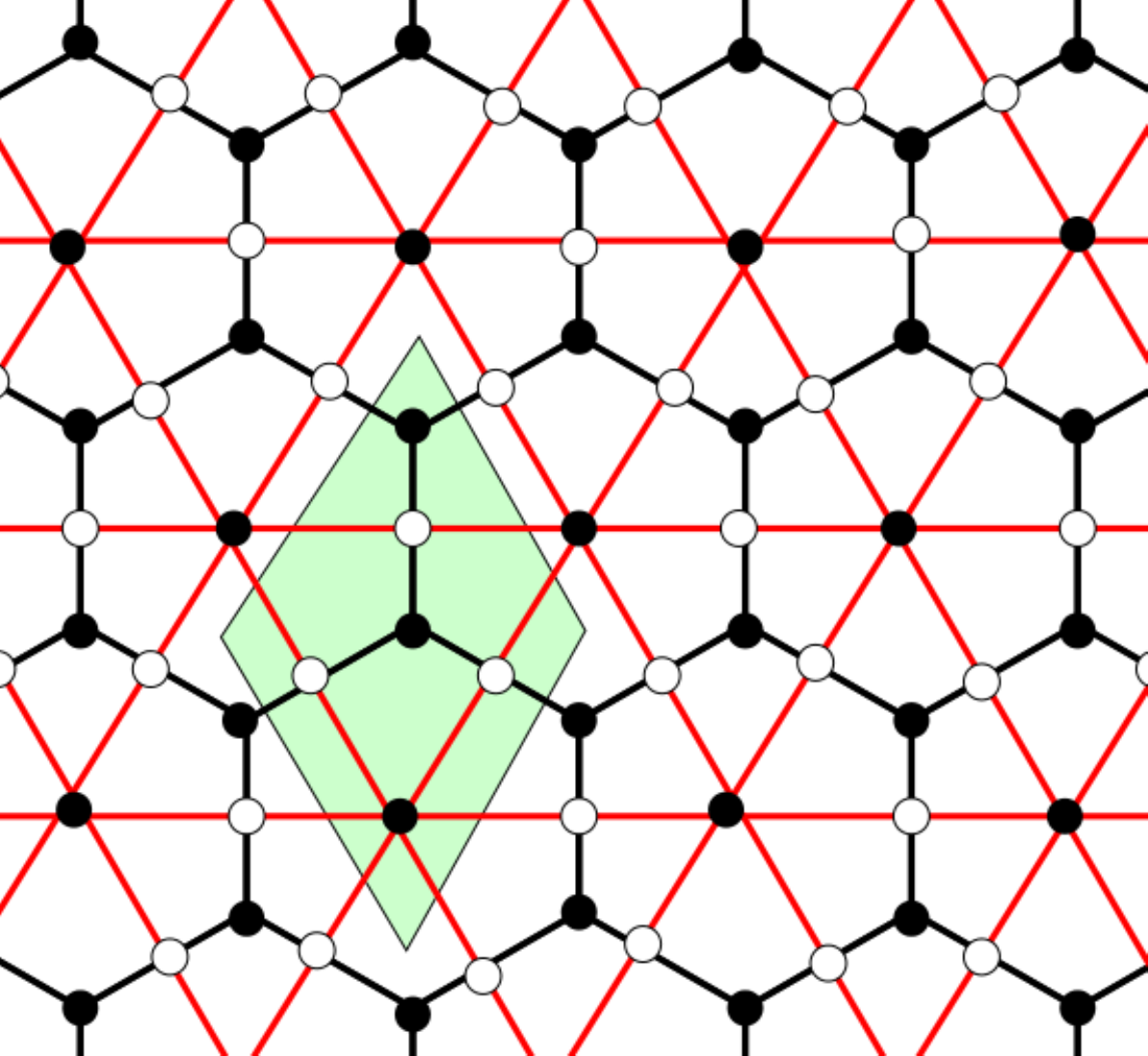}
\\
(a) & \qquad (b) \\
\end{tabular}
\caption{(a) Biperiodic triaxial link $\LL$, and fundamental domain
  for $L$.  (b) Temperleyan graph $\Gb$, and fundamental domain for
  $\gb$.}
\label{fig:triaxial}
\end{figure}

\section{Bipyramid volume of a planar lattice graph}\label{sec:bipyramid}

Finding numerical values for the spanning tree entropy of a planar
lattice graph involves laborious computations, and exact values are usually
difficult to prove (see \cite{ckl:mm_voldet}).  Instead, below we
define the bipyramid volume, which can be easily computed for a planar
lattice graph.  If the bipyramid volume is a lower bound for $z_{\G}$, then
the lower bound in Conjecture~\ref{conj:vol-entropy} holds for $\G$.
As we show in many examples, the bipyramid volume is also an excellent
estimate for the spanning tree entropy.  However, in
Section~\ref{sec:counterexample}, we show that it does not always
provide a lower bound.

Let $G$ and $G^*$ be dual $2$-connected graphs embedded on the torus, such
that their faces are topologically disks, and each edge of $G$
intersects its dual edge exactly once and does not intersect any other
edge of $G^*$.  Let $\G$ be the biperiodic graph in $\R^2$, such that
$G=\G/\Lambda$ for $\Lambda$ as above.  These conditions will be
assumed for all graphs and lattice graphs below.

We form the {\em Temperleyan graph} $\gb$ on the torus as follows: The
black vertices of $\gb$ are the vertices of $G$ and of $G^*$; the
white vertices of $\gb$ are the intersections of edges of $G$ and of
$G^*$.  The edges of $\gb$ join a black vertex for each face of $G$
and of $G^*$ to every white vertex incident to the face, so that $\gb$ is a balanced bipartite graph.
Let $\Gb$ be the biperiodic bipartite graph, such that $\gb=\Gb/\Lambda$.
See Figure~\ref{fig:triaxial}.

\begin{definition}\label{def:bipyramid}
Let $B_n$ denote the hyperbolic regular ideal bipyramid whose link
polygons at the two coning vertices are regular $n$--gons.
The hyperbolic volume of $B_n$ is given by
$$ \vol(B_n) = 2n\,L(\pi/n), \quad \text{for} \ L(\theta)=-\int_0^\theta \log|2\sin t|\, dt,$$
where $L(\theta)$ is the Lobachevsky function.
Since $B_3$ consists of two regular ideal tetrahedra and $B_4$ is the hyperbolic regular ideal octahedron, 
$$\vol(B_3)=2\vtet\approx 2.02988 \quad \text{and} \quad \vol(B_4)=\voct\approx 3.66386.$$
We allow $n=2$, but note $\vol(B_2)=0$.
For $n\geq 3,\ \vol(B_n)< 2\pi\log(n/2)$ and grows asymptotically like $2\pi\log(n/2)$ \cite{Adams:bipyramids}.
See the table of values of $\vol(B_n)$ adapted from \cite{Adams:bipyramids}.

For $v\in VG$ and $f\in FG$, let $|v|$ and $|f|$ denote their degree; i.e., the number of incident edges.
Define the \emph{bipyramid volume} of a toroidal graph $G$ as
$$ \volbp(G) = \sum_{f \in FG}\vol(B_{|f|}).$$
\end{definition}

\begin{figure}
\hspace*{-1.2in}
  \begin{minipage}[b]{0.85\textwidth}\centering 
\begin{tabular}{ccc}
  \includegraphics[height=1.3in]{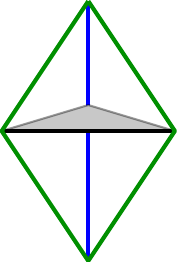} \qquad &
  \includegraphics[height=1.3in]{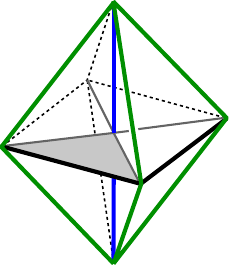} \qquad &
  \includegraphics[height=1.3in]{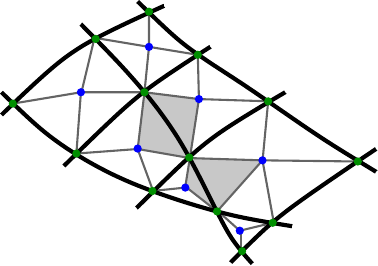}\\
  (a) & (b) & (c) 
\end{tabular}
\caption{(a) A tetrahedron with stellating (blue), vertical (green), and horizontal (black) edges.
  A link triangle for an ideal vertex at $\infty$ is shaded.
  (b) Tetrahedra are glued at the stellating edge, centrally triangulating faces of $G$ on the torus.
  (c) Shaded triangles indicate the four tetrahedra glued along one horizontal edge.\\ \\
  Right: Volumes of hyperbolic regular ideal $n$-bipyramids.}
\label{Fig:quad-graph}
  \end{minipage}
  \hspace{-1cm}
\begin{minipage}[b]{0.1\textwidth}\centering 
\begin{tabular}{|c|c|}
\hline  
$n$ & $\vol(B_n)$ \\ \hline
2 & 0 \\ \hline 
3 & 2.02988 \\ \hline
4 & 3.66386 \\ \hline
5 & 4.98677 \\ \hline
6 & 6.08965 \\ \hline
7 & 7.03257 \\ \hline
8 & 7.85498 \\ \hline
9 & 8.58367 \\ \hline
10 & 9.23755 \\ \hline
11 & 9.83040 \\ \hline
12 & 10.37255 \\ \hline
13 & 10.87192 \\ \hline
14 & 11.33474 \\ \hline
15 & 11.76597 \\ \hline
20 & 13.56682 \\ \hline
100 & 23.67095 \\ \hline
1,000 & 38.13817 \\ \hline
1,000,000 & 81.5409 \\ \hline
\end{tabular}
\vspace*{-0.1in}
\end{minipage}
\end{figure}

\begin{prop}\label{prop:vol}
  $0 \ < \ \vol(G) \ \leq \ \volbp(G)+\volbp(G^*)$.
\end{prop}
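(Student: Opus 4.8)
The plan is to prove the two inequalities separately. The left inequality $0<\vol(G)$ is immediate: under the standing hypotheses of this section, that $G$ and $G^*$ are $2$-connected with disk faces, the space $(T^2\times I)-L$ is a complete finite-volume hyperbolic $3$-manifold by \cite[Theorem~4.2]{HowiePurcell}, and a nonempty finite-volume hyperbolic $3$-manifold has positive volume. Hence $\vol(G)=\vol((T^2\times I)-L)>0$. All the work is in the upper bound, and the idea there is to decompose the complement into ideal bipyramids and to bound each piece by the regular ideal bipyramid of the same combinatorial type.

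First I would describe the decomposition indicated in Figure~\ref{Fig:quad-graph}. Cone each face of the toroidal alternating diagram of $L$ to the two sides of the projection torus, that is, to the boundary tori $T^2\times\{0\}$ and $T^2\times\{1\}$. Each diagram face $F$ becomes an ideal bipyramid whose two apexes lie on the boundary tori and whose equatorial polygon has one ideal vertex for each crossing around $F$; since the crossings around a face correspond to its incident edges, this bipyramid is combinatorially $B_{|F|}$, cut by the stellating edge into $|F|$ ideal tetrahedra as in Figure~\ref{Fig:quad-graph}(b). The faces of the diagram are exactly the faces of $G$ together with the faces of $G^*$, the two checkerboard colors, so this yields a topological ideal polyhedral decomposition of $(T^2\times I)-L$ into the bipyramids $\{B_{|f|}\}_{f\in FG}\cup\{B_{|f|}\}_{f\in FG^*}$.

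The volume estimate then rests on two ingredients. Straightening this decomposition with respect to the complete hyperbolic structure via the developing map, each combinatorial bipyramid is realized by a geodesic ideal bipyramid, and by Gromov straightening the hyperbolic volume $\vol((T^2\times I)-L)$ is at most the sum of the absolute volumes of these straightened pieces; this is the standard fact that the volume never exceeds the total straightened volume of any ideal polyhedral decomposition. The second ingredient is that among all geodesic ideal bipyramids with an $n$-gon equator, the regular ideal bipyramid $B_n$ has maximal volume \cite{Adams:bipyramids}, which follows from concavity of the volume functional on angle structures whose unique critical point is the symmetric configuration. Bounding each straightened $B_{|f|}$ by $\vol(B_{|f|})$ and summing over all faces gives
$$ \vol(G)\ \leq\ \sum_{f\in FG}\vol(B_{|f|})\ +\ \sum_{f\in FG^*}\vol(B_{|f|})\ =\ \volbp(G)+\volbp(G^*).$$

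I expect the main obstacle to be the volume estimate rather than the combinatorics of the decomposition. The delicate point is justifying the one-sided inequality without assuming the decomposition is geometric: straightened pieces may degenerate or reverse orientation, so the clean statement comes from representing the relative fundamental class of $(T^2\times I,\partial)$ by the straightened decomposition and passing from signed to absolute volumes. Note that the crude bound of $|f|$ times the maximal tetrahedron volume $\vtet$ would be too weak here, since $\vol(B_n)<n\,\vtet$; it is essential that the $|f|$ tetrahedra forming $B_{|f|}$ share the stellating edge, so that the regular bipyramid, not $|f|$ independent regular tetrahedra, is the correct maximizer. Granting the straightening inequality, the remaining input is purely local and is exactly the cited maximality of the regular ideal bipyramid.
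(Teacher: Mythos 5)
Your proposal is correct and follows essentially the same route as the paper: decompose $(T^2\times I)-L$ into ideal bipyramids, one over each face of $G$ and of $G^*$, and bound each by the regular ideal bipyramid via Adams's maximality result. The only difference is that where you invoke Gromov straightening to justify the one-sided volume inequality, the paper instead cites \cite[Lemmas~2.5, 2.6]{ckp:gbal} for the existence of a positively oriented ideal triangulation whose tetrahedra assemble into exactly these bipyramids, making the decomposition geometric from the start.
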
  
\begin{proof}
This is essentially proved in the last part of
\cite[Theorem~7.5]{ckp:gbal}, and the extra condition in that theorem
is not required for this part.  Our conditions on $G$ ensure that
$(T^2\times I)-L$ is hyperbolic, and that after collapsing bigons if
needed, $(T^2\times I)-L$ admits an ideal, positively oriented
triangulation \cite[Lemma~2.5]{ckp:gbal}.  (In Section~\ref{sec:rak},
we add conditions for this hyperbolic structure to be right-angled.)
Thus, $\vol(G)>0$.

By \cite[Lemma~2.6]{ckp:gbal}, the ideal tetrahedra in this
triangulation can be combined around every stellating edge as in
Figure~\ref{Fig:quad-graph} to form a hyperbolic ideal bipyramid over
each face of $L$.  Since the projection of $L$ is the medial graph of
$G$, these are exactly the faces of $G$ and of $G^*$.
By~\cite{Adams:bipyramids}, the maximal volume of hyperbolic
bipyramids is achieved by the regular bipyramids, whose volumes sum to
$\volbp(G)+\volbp(G^*)$.
\end{proof}  

\begin{definition}\label{def:zfd}
Let $\zfd_G = |VG|z_{\G}$, which is the spanning tree entropy per fundamental domain of $\G$.
The spanning tree entropy per vertex, rather than per fundamental
domain, is usually computed.  To bound $z_{\G}$, we define the
\emph{bipyramid volume of} $\G$ as follows:
$$ \vg = \frac{\volbp(G)+\volbp(G^*)}{|VG|}.$$
All faces of $\gb$ are quads, and $|F\gb|=2|EG|$, so
$\displaystyle \volbp(\gb)= |F\gb|\vol(B_4) = 2|EG|\voct$. Thus,
$$ \vbar(\G)=\frac{|EG|\voct}{|VG|} = \frac{\volbp(\gb)}{2|VG|}.$$
\end{definition}
\begin{prop}\label{prop:eq1}
If $\G$ satisfies the the inequalities in~(\ref{eq:volbp-zfd}), then Conjecture~\ref{conj:vol-entropy} holds for $\G$.
\begin{equation}\label{eq:volbp-zfd}
 \volbp(G)+\volbp(G^*) \leq 2\pi\,\zfd_G \leq |EG|\voct \quad \Longleftrightarrow \quad \vg \leq 2\pi\,z_{\G} \leq \vgb. 
\end{equation}
\end{prop}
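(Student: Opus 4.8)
The plan is to treat the two assertions packaged into this proposition separately: the equivalence (the $\Longleftrightarrow$) of the two inequality chains in~(\ref{eq:volbp-zfd}), and the implication that satisfying them forces Conjecture~\ref{conj:vol-entropy}. Both reduce to elementary rescaling once Proposition~\ref{prop:vol} and the definitions in Definition~\ref{def:zfd} are invoked.

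For the equivalence, I would divide each of the three terms in the left-hand chain by $|VG|$, which is legitimate since $|VG|>0$. By Definition~\ref{def:zfd} we have $\zfd_G = |VG|\,z_{\G}$, so $\tfrac{2\pi\,\zfd_G}{|VG|} = 2\pi\,z_{\G}$; likewise $\tfrac{\volbp(G)+\volbp(G^*)}{|VG|} = \vg$ and $\tfrac{|EG|\voct}{|VG|} = \vgb$, directly from the same definition. This converts the left chain into the right chain term by term, and multiplying back by $|VG|$ reverses the process, so the two chains are equivalent.

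For the implication, the key external input is Proposition~\ref{prop:vol}, which gives $0 < \vol(G) \leq \volbp(G)+\volbp(G^*)$. Dividing by $|VG|$ yields $0 < \vol(\G) \leq \vg$. Assuming the hypothesis in its right-hand form, $\vg \leq 2\pi\,z_{\G} \leq \vgb$, I would chain $\vol(\G) \leq \vg \leq 2\pi\,z_{\G}$ to obtain the lower bound, while the upper bound $2\pi\,z_{\G} \leq \vgb$ is already part of the hypothesis. Together these give precisely $\vol(\G) \leq 2\pi\,z_{\G} \leq \vgb$, which is Conjecture~\ref{conj:vol-entropy} for $\G$.

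I expect no genuine obstacle here: the entire content lives in Proposition~\ref{prop:vol}, which supplies the only nontrivial step, namely the improvement of the lower bound from the bipyramid volume $\vg$ down to the hyperbolic volume $\vol(\G)$. The remaining work is bookkeeping with the definitions and division by the positive integer $|VG|$. The value of the proposition is organizational: it records that the hypotheses, phrased in the computable bipyramid-volume language of~(\ref{eq:volbp-zfd}), are equivalent under per-vertex normalization to, and are sufficient to conclude, the geometric statement of the conjecture.
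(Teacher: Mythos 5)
Your proposal is correct and follows essentially the same route as the paper, whose entire proof is the one-line citation of Proposition~\ref{prop:vol} giving $0 < \vol(\G) \leq \vg$; you have simply spelled out the per-vertex rescaling and the chaining $\vol(\G)\leq\vg\leq 2\pi\,z_{\G}\leq\vgb$ that the paper leaves implicit.
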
  
\begin{proof}
  By Proposition~\ref{prop:vol},  \ $0 \ < \ \vol(\G) \ \leq \ \vg$.
\end{proof}  
We use Proposition~\ref{prop:eq1} to prove
Conjecture~\ref{conj:vol-entropy} for infintely many planar lattice graphs
in Section~\ref{sec:infinite}.  However, in
Section~\ref{sec:counterexample} we discuss a planar lattice graph for which
Conjecture~\ref{conj:vol-entropy} holds, but which does not satisfy
the lower bound in~(\ref{eq:volbp-zfd}).

As mentioned above, for the regular triangular, square and hexagonal lattice graphs,
$$ \nu^{\lozenge}(\mathcal G_{\triangle}) = 2\pi\,z_{\G_{\triangle}} =  10\vtet, \ \ \nu^{\lozenge}(\mathcal G_{\square}) = \vbar(\mathcal G_{\square}) = 2\pi\,z_{\G_{\square}} =   2\voct, \ \ \nu^{\lozenge}(\mathcal G_{\hexagon}) = 2\pi\,z_{\G_{\hexagon}} =  5\vtet.$$
Hence, the lower bound in Conjecture~\ref{conj:vol-entropy} is sharp for $\G_{\triangle},\,\G_{\square},\,\G_{\hexagon}$, and the upper bound is sharp for $\G_{\square}$;
see, e.g., \cite[Theorems~12, 13]{ckl:mm_voldet}.

\begin{question}\label{Q:equality}
  Is there a planar lattice graph $\G$, other than $\G_{\triangle},\,\G_{\square},\,\G_{\hexagon}$, for which $\vg=2\pi\,z_{\G}$?
  Is there a planar lattice graph $\G$, other than $\G_{\square}$, for which $\vgb=2\pi\,z_{\G}$?
\end{question}

The Tait graphs for the square weave and the triaxial link as in
Figure~\ref{fig:triaxial} are $\G_{\triangle},\,\G_{\square},\,\G_{\hexagon}$.
Question~\ref{Q:equality} asks whether these are the only two
biperiodic alternating links, such that
$\volbp(G)+\volbp(G^*)=2\pi\,\zfd_G$; and whether the square weave is
the only one such that $2\pi\,\zfd_G=|EG|\voct$ (see \cite{ckl:mm_voldet,
  ckp:gmax}).
Question~\ref{Q:equality} seems related to \cite[Corollary
  20]{Kaplan-Kelly}, which established that the square weave and the
triaxial link are the only semi-regular links with certain geometric
properties, but these properties may not be required for these
equalities to hold.

\begin{table}
\centering  
\begin{tabular}{lc|c|c|c}
\hspace*{0.6cm} Planar lattice graph $\G$ & $|VG|$ & $\vg/2\pi$ & $z_{\G}$ & $\vgb/2\pi$  \\
\hline \rule{0pt}{3ex} 
1. \ Triangular ($3^6$) & 1 & 1.61533 & 1.61533 & 1.74937  \\
\ 2. \ Square ($4^4$) & 1 & 1.16624 & 1.16624 & 1.16624  \\
\ 3. \ Hexagonal ($6^3$) & 2 & 0.80766 & 0.80766 & 0.87468  \\
\ 4. \ Kagome (3-6-3-6) & 3 & 1.12157 & 1.13570 & 1.16624  \\
\ 5. \ Square-octagon (4-8-8) & 4 & 0.78139 & 0.78668 & 0.87468  \\
\ 6. \ Medial(4-8-8) & 6 & 1.10405 & 1.12171 & 1.16624  \\
\ 7. \ 3-12-12  & 6 & 0.70590 & 0.72056 & 0.87468  \\
\ 8. \ 3-4-6-4 & 6 & 1.14390 & 1.14480 & 1.16624 \\
\ 9. \ 4-6-12 & 12 & 0.76795 & 0.77780 & 0.87468 \\ 
10.\, Cairo pentagonal lattice graph & 6 & 0.93886 & 0.94057 & 0.97187 \\
11.\, Lattice graph shown in Figure~\ref{fig:typical}  & 9 & 0.84361 & 0.84744 & 0.90708  \\ 
12.\, Lattice graph shown in Figure~\ref{fig:lattice-list} (\#12) & 3 & 1.07689 & 1.10365 & 1.16624  \\
13.\, Lattice graph shown in Figure~\ref{fig:lattice-list} (\#13) & 2 & 1.39079 & 1.39928 & 1.74937  \\ 
14.\, $3^2$-4-3-4 & 4 & 1.40830 & 1.41086 & 1.45780 \\
15.\, $4^4$; $3^3$-$4^2$ & 3 & 1.32761 & 1.32774 & 1.36062 \\
16.\, $3^6$; $3^3$-$4^2$ & 3 & 1.47731 & 1.47739 & 1.55499 \\ \\

\end{tabular}
\caption{Geometric bounds for $z_{\G}$ for 16 planar
  lattice graphs. See Figures~\ref{fig:lattice-list}, \ref{fig:typical}
  and \ref{fig:dual-list} for pictures of the planar lattice graphs.}
\label{table1}
\end{table}

\begin{center}
  \begin{figure}[h]
    \begin{tabular}{ccccc}
\includegraphics[height=1in]{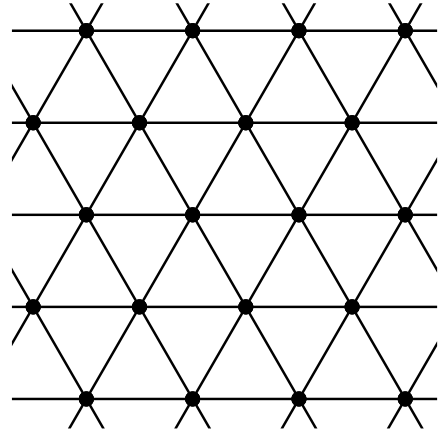} & \includegraphics[height=1in]{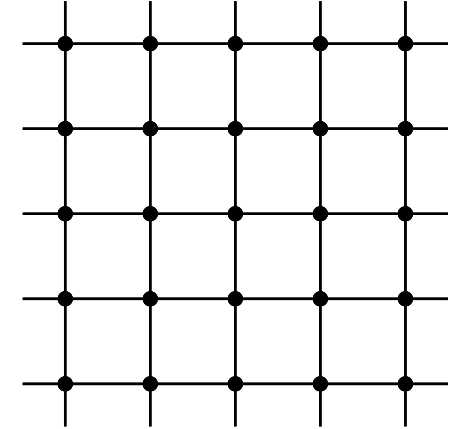} & \includegraphics[height=1in]{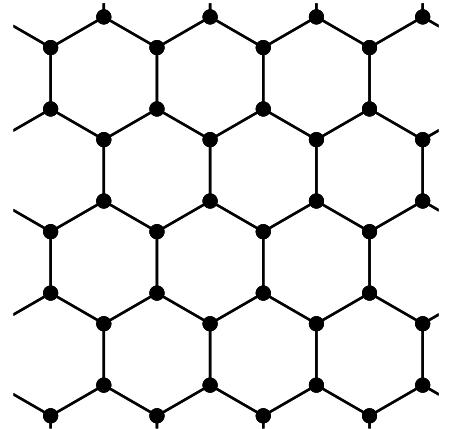} & \includegraphics[height=1in]{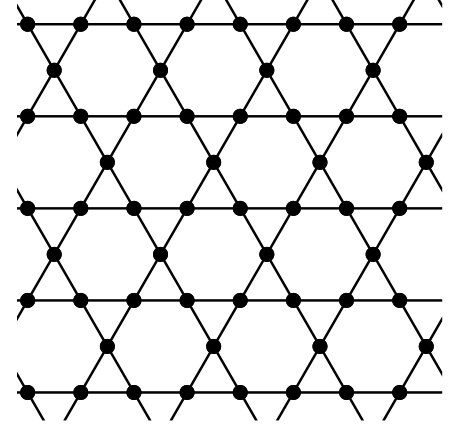} & \includegraphics[height=1in]{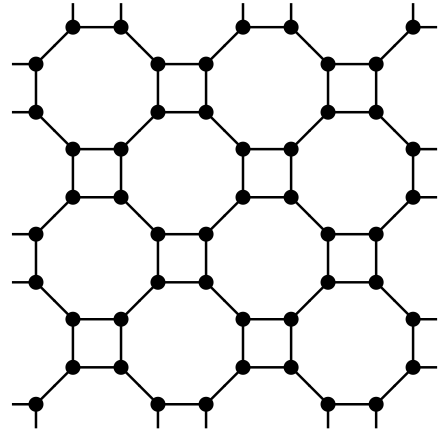} \\
      {\small Lattice graph \#1 } & {\small Lattice graph \#2 } & {\small Lattice graph \#3 } &{\small Lattice graph \#4 } &{\small Lattice graph \#5 }\\
      &&&& \\
\includegraphics[height=1in]{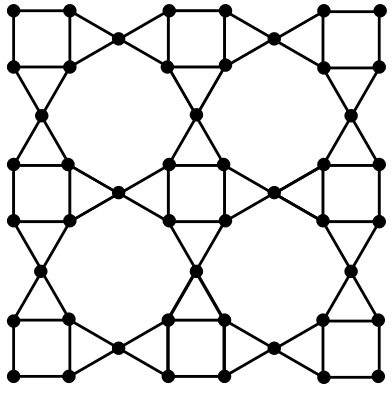} & \includegraphics[height=1in]{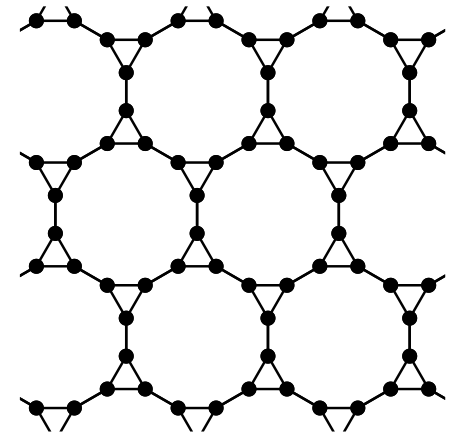} & \includegraphics[height=1 in]{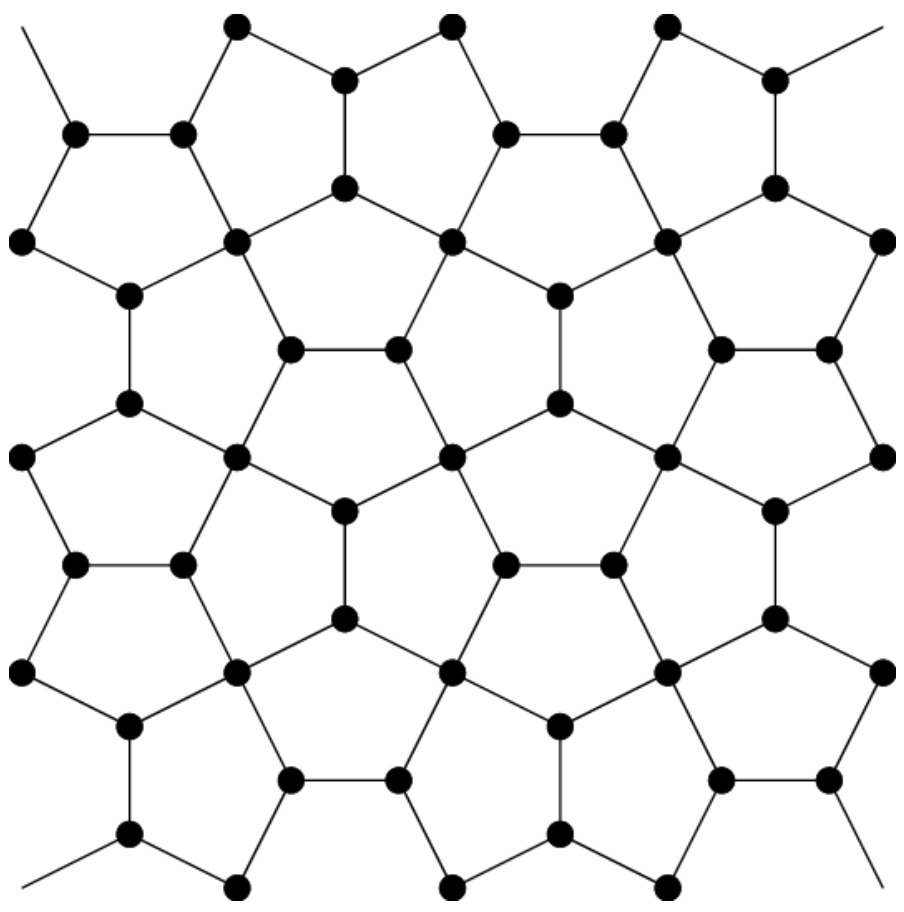} & \includegraphics[height=1in]{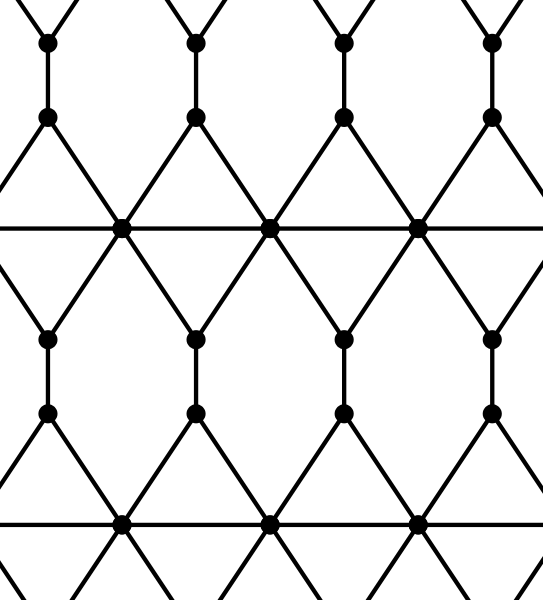} & \includegraphics[height=1in]{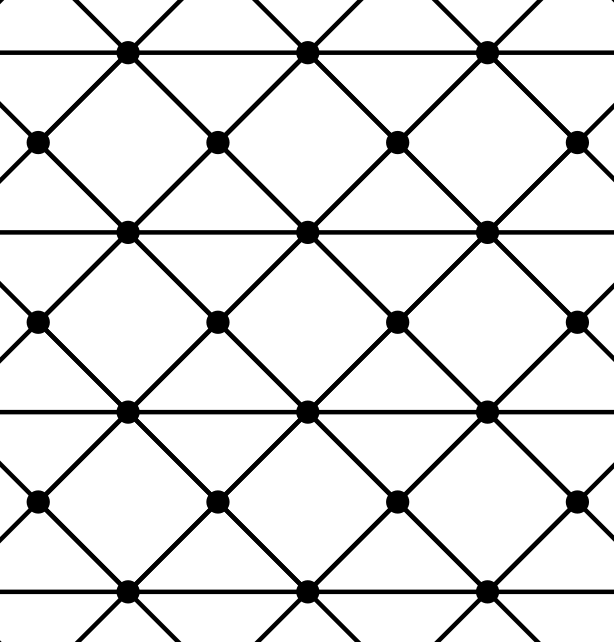}   \\
      {\small Lattice graph \#6 } & {\small Lattice graph \#7 } & {\small Lattice graph \#10 }  & {\small Lattice graph \#12 } & {\small Lattice graph \#13 } \\
      &&&& \\
 & \includegraphics[height=1in]{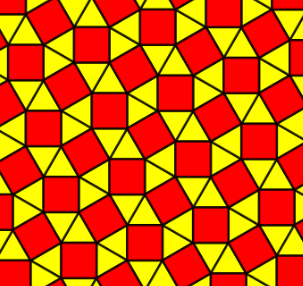} & \includegraphics[height=1in]{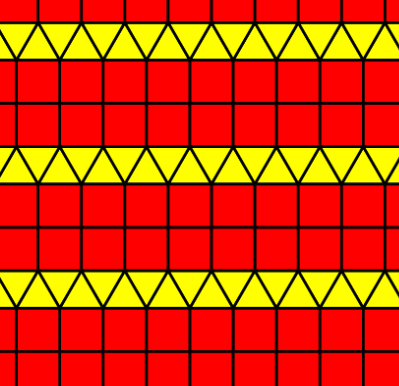} & \includegraphics[height=1in]{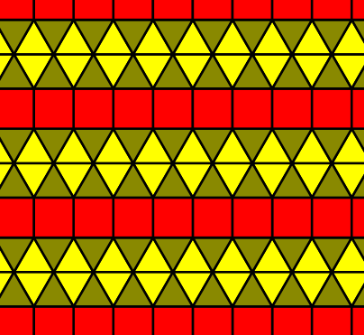} & \\                
       & {\small Lattice graph \#14 } & {\small Lattice graph \#15 } &{\small Lattice graph \#16 } &\\
      \end{tabular}
    \caption{Some planar lattice graphs in Table \ref{table1}.  Figures for Lattice graphs \#1-5 and~7 are from \cite{TeuflWagner}.  Figures for Lattice graphs \#14-16 are from \cite{semi-regular-wiki}.  For Lattice graphs \#8-9, see Figure~\ref{fig:dual-list}.} 
\label{fig:lattice-list}
\end{figure}
\end{center}

\begin{center}
\begin{figure}
  \includegraphics[height=1.5in]{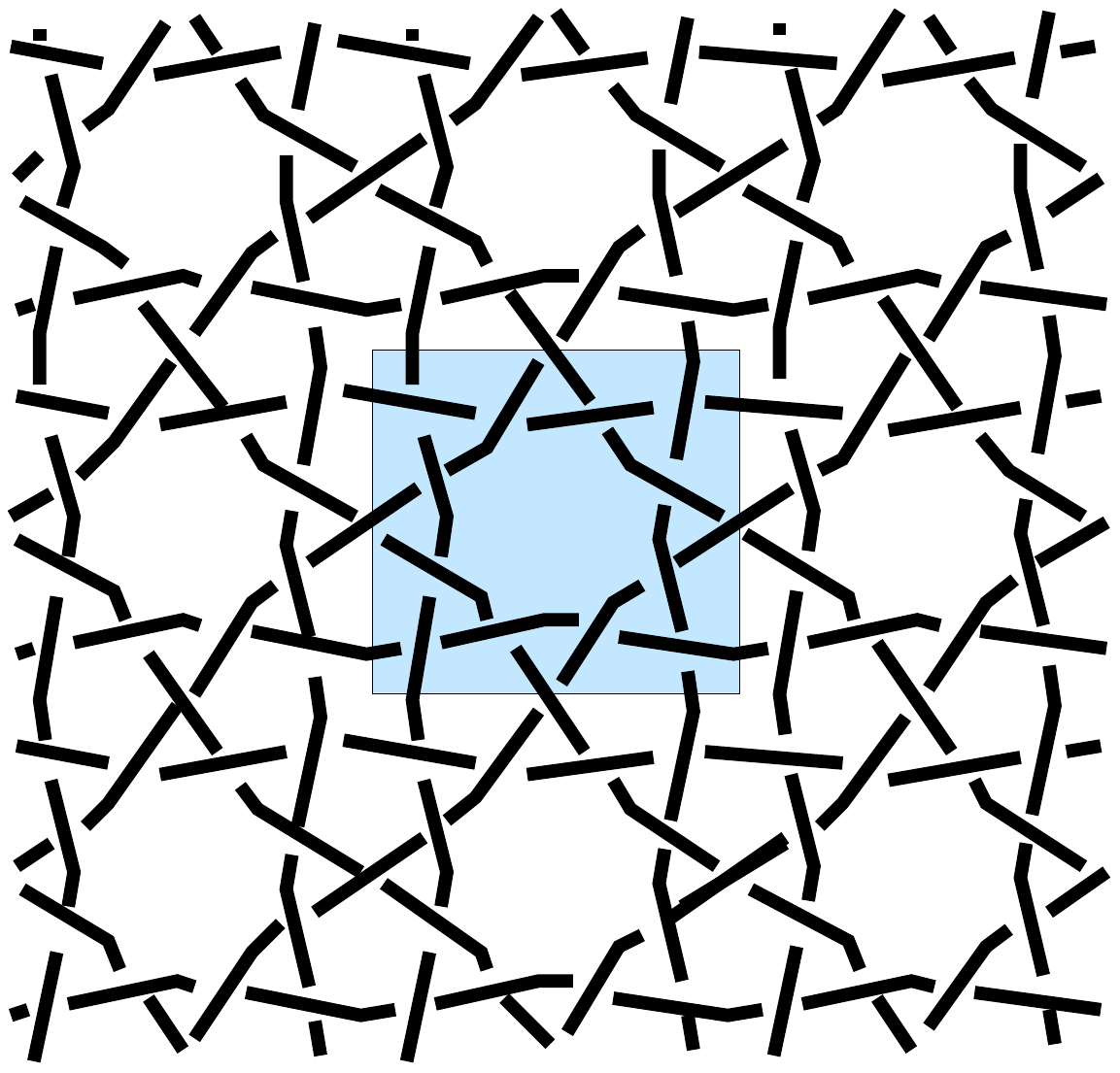} \hspace{0.5in}
  \includegraphics[height=1.5in]{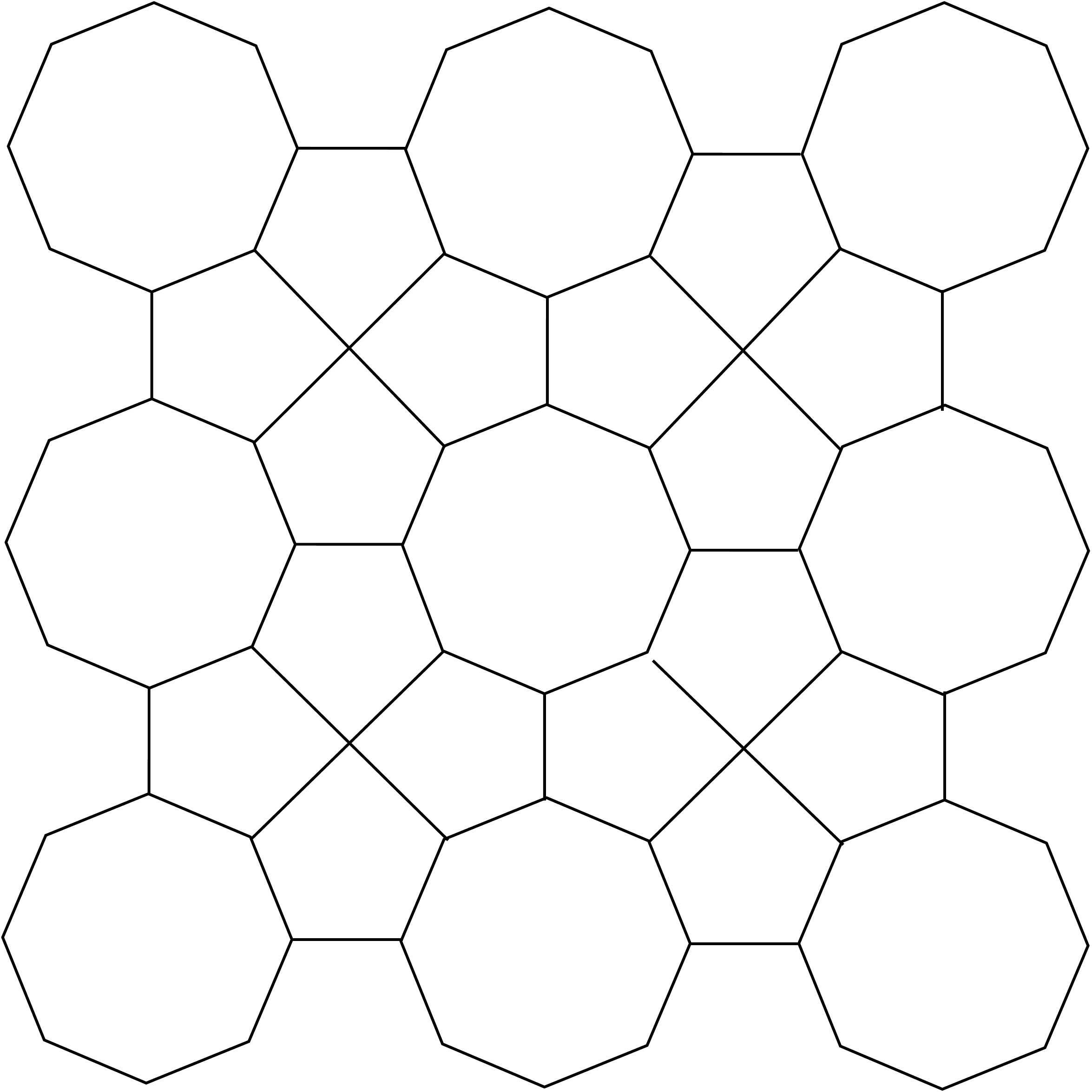} \hspace{0.5in}
    \caption{Biperiodic alternating link whose Tait graph is Lattice graph \#11 in Table~\ref{table1}.}
  \label{fig:typical}
\end{figure}
\end{center}

\subsection{Examples}
In Table~\ref{table1}, we show that $\vg \leq 2\pi\,z_{\G} \leq \vgb$
holds for 16 planar lattice graphs.  Therefore,
Conjecture~\ref{conj:vol-entropy} holds for these lattice graphs, and
infinitely many other lattice graphs using results in
Section~\ref{sec:infinite}.  The values in each column for lattice
graphs \#1-7 and 12-13 are known by exact computations, and equal
values in Table~\ref{table1} indicate exact equality (see
\cite{ckl:mm_voldet}). For the remaining lattice graphs, $z_{\G}$ is
computed numerically.  Below, we discuss each of these examples.

\begin{enumerate}
\item[1-3.] For the regular lattice graphs, the equal values in the table are exactly equal.
\item[4.] The kagome lattice graph is the medial graph of the hexagonal lattice graph, as in Theorem~\ref{thm:medial}.
\item[5.] Exact values for the square-octagon lattice graph are computed in \cite[Theorem~19]{ckl:mm_voldet}.
\item[6.] This lattice graph is the medial graph of the square-octagon lattice graph, as in Theorem~\ref{thm:medial}.
\item[7.] The 3-12-12 lattice graph is the truncation of the hexagonal lattice graph, as in Theorem~\ref{thm:G3}.
\item[8.] The 3-4-6-4 lattice graph is the medial graph of the kagome lattice graph.
  Its numerical $z_{\G}$ value is computed in \cite[Section~3.4]{Chang-Wang-2006}.  
\item[9.] The numerical $z_{\G}$ value for this lattice graph is computed in \cite[Section~3.5]{Chang-Wang-2006}. 
\item[10.] The numerical $z_{\G}$ value for this lattice graph is computed in \cite[Corollary~2.2]{Li-Yan}.
\item[11.] This lattice graph is the Tait graph of the link shown in Figure~\ref{fig:typical}, and  
  its $z_{\G}$ value is computed numerically in \cite[Section~1.4]{ckl:mm_voldet}.
\item[12.] Exact values for this lattice graph are computed in \cite[Theorem~15]{ckl:mm_voldet}.  
\item[13.] Exact values for this lattice graph are computed in \cite[Theorem~17]{ckl:mm_voldet}.
\item[14.] This lattice graph is the dual of Lattice graph \#10.
\item[15-16.] We computed the numerical $z_{\G}$ values using the methods discussed in \cite{ckl:mm_voldet}.
\end{enumerate}

\subsection{Non-example}\label{sec:counterexample}

Let $\G$ be the $3^3$-$4^2$ lattice graph, which is shown below.
Using the methods discussed in \cite{ckl:mm_voldet}, we computed the characteristic polynomial $p(z,w)$ of the toroidal dimer model for $\G$:
$$ p(z,w)=-wz^4+w^2z^2+11wz^3+w^2z-24wz^2+z^3+11wz+z^2-w. $$
The Mahler measure $\m(p(z,w))=\zfd_G$ by \cite{KOS}, as explained in \cite[Proposition~5.3]{ck:det_mp}.

Conjecture~1 of \cite{ckl:mm_voldet} would imply that $\displaystyle \volbp(G)+\volbp(G^*) \leq 2\pi\,\m(p(z,w))$.
However, we numerically computed $\m(p(z,w))$, and $\vol((T^2\times I)-L)$ using Snappy \cite{snappy}:
\begin{align*}
\vol((T^2\times I)-L) &  < & 2\pi\, \m(p(z,w)) &  < & \volbp(G)+\volbp(G^*) &  < & |EG|\voct\\
17.55732 \ &< & 17.67995 \ &< & 17.69718 \ &< & 18.31931\\
\end{align*}
Therefore, $\G$ does not satisfy the lower bound in~(\ref{eq:volbp-zfd}).
The biperiodic alternating link $\LL$ whose Tait graph is $\G$ is a counterexample to \cite[Conjecture~1]{ckl:mm_voldet}.
Nevertheless, $\G$ satisfies Conjecture~\ref{conj:vol-entropy}.

\

In contrast to the examples in Table~\ref{table1}, in this case we have (with $|VG|=2$):
\begin{center}
  \begin{tabular}{lc}
    \includegraphics[height=1in]{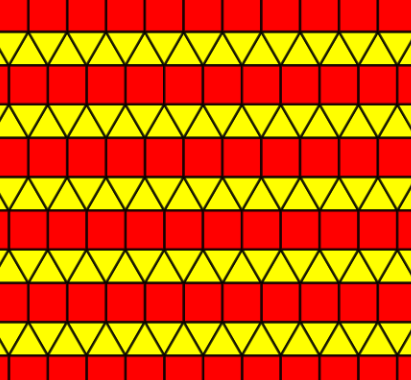}  &
    \qquad
    \begin{tabular}{ccccccc}
   $\ \qquad \quad \vol(\G)$ & $<$ & $2\pi\,z_{\G}$ & $<$ & $\vg$ & $<$ & \!\!\!\!\!\! $\vgb$ \\ \\
   $2\pi*\big(\quad 1.39717$  & $<$ & $1.40693$ & $<$ & $1.40830$  & $<$ & $1.45780 \ \quad \big)$ \\ \\ \\ \\ \\ \\
\end{tabular}
\end{tabular}
\end{center}

\vspace*{-0.6in}

\noindent
The numerical $z_{\G}$ value is also computed in \cite[Section~3.1]{Chang-Wang-2006}.
We do not know any other counterexamples to
\cite[Conjecture~1]{ckl:mm_voldet}. The above figure from
\cite{semi-regular-wiki} suggests that $\G$ is similar to Lattice graphs
\#14-16 in Table~\ref{table1}, but for these lattice graphs $\vg < 2\pi\,z_{\G}$,
although their values of $\vg$ and $2\pi\,z_{\G}$ are very close!  Adding
parallel edges to any of these four lattice graphs, as in
Theorem~\ref{thm:Gs} below, also results in lattice graphs $\G$ with
$\vg < 2\pi\,z_{\G}$.

\begin{question}\label{Q:counterexample}
  For which other planar lattice graphs is $\displaystyle 2\pi\,z_{\G} < \vg$?
\end{question}

\section{Finite planar graphs}\label{sec:planar}

For a finite connected graph $\Gamma$, the number of spanning
trees $\tau(\Gamma)$ is an important measure of its complexity.
For finite planar graphs, many interesting bounds are known (see,
e.g., \cite{Buchin-Schulz}), and asymptotic enumeration of spanning
trees by finite planar graphs that approach a planar lattice graph is
sometimes exactly computable (see, e.g., \cite{ckl:mm_voldet, KOS,
  Lyons}).

Any finite connected planar graph $\Gamma$ is the Tait graph of an
alternating link $K$ in $S^3$, so the link diagram of $K$ projects to
the medial graph of $\Gamma$.  We define
$$ \vol(\Gamma)=\vol(S^3-K),$$
$$\volbp(\Gamma)= \sum\nolimits_{f \in \{\substack{\text{bounded}\\ \text{faces of}} \ \Gamma\}}\vol(B_{|f|}) \quad \text{and} \quad
\nu^{\lozenge}(\Gamma)= \frac{\volbp(\Gamma)+\volbp(\Gamma^*)}{|V\Gamma|}.$$

\begin{prop}\label{prop:alt-hyp}
If neither $\Gamma$ nor $\Gamma^*$ has loops or a cut-vertex, and
neither is a cycle graph, then
$$ 0 \ < \ \vol(\Gamma) \ < \ \volbp(\Gamma) + \volbp(\Gamma^*).$$
\end{prop}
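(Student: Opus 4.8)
The plan is to prove the two inequalities by separate mechanisms: the lower bound $0<\vol(\Gamma)$ from Menasco's hyperbolicity theorem, and the upper bound by a bipyramid decomposition of $S^3-K$ that mirrors Proposition~\ref{prop:vol}, with the genuinely new ingredient being the \emph{strict} inequality. First I would read off the hypotheses as Menasco's conditions on the alternating diagram $D$ of $K$, which is the medial graph of $\Gamma$. A loop of $\Gamma$, or equivalently a loop of $\Gamma^*$ (a bridge of $\Gamma$), is precisely a nugatory crossing, so excluding loops in both graphs makes $D$ reduced. A cut-vertex of $\Gamma$ or of $\Gamma^*$ forces a connected-sum decomposition, so excluding cut-vertices makes $K$ prime. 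Finally $\Gamma=C_n$ (respectively $\Gamma^*=C_n$, i.e.\ $\Gamma$ a two-vertex multigraph) is exactly the Tait graph of the $(2,n)$-torus link, so forbidding cycle graphs removes the only reduced prime alternating links that fail to be hyperbolic. Since $\Gamma$ is connected, $K$ is non-split, and Menasco's theorem then gives that $S^3-K$ is hyperbolic, whence $\vol(\Gamma)=\vol(S^3-K)>0$.

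For the upper bound I would first establish the non-strict estimate $\vol(S^3-K)\le\volbp(\Gamma)+\volbp(\Gamma^*)$ by the same argument that proves Proposition~\ref{prop:vol}. The projection of $K$ is the medial graph of $\Gamma$, so $S^3-K$ carries an ideal triangulation whose tetrahedra regroup around the stellating edges into one ideal bipyramid over each face of $D$ (\cite{Adams:bipyramids}, cf.\ \cite{ckp:gbal}). The faces of the medial graph are the faces of $\Gamma$ together with the faces of $\Gamma^*$, and bounding each bipyramid above by the maximal, i.e.\ regular, ideal bipyramid $\vol(B_{|f|})$ gives the claimed sum. The point that needs care here, and which distinguishes the $S^3$ case from the closed torus, is the bookkeeping of the distinguished outer faces of the planar graphs: I would verify that the bounded-face sums defining $\volbp(\Gamma)$ and $\volbp(\Gamma^*)$ account for exactly the bipyramids occurring in the decomposition, so that no spurious face at infinity is double-counted or dropped.

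The hard part will be upgrading this to the strict inequality. Equality in the regular-bipyramid bound holds if and only if every bipyramid of the face decomposition is itself a \emph{regular} ideal bipyramid and these assemble, with consistent edge-angle sums, into the complete hyperbolic structure on $S^3-K$. I expect this to be impossible for a finite link in $S^3$: the completeness and edge-gluing equations for regular ideal bipyramids can be met only in the biperiodic setting, which is precisely why Proposition~\ref{prop:vol} permits equality (as for the square lattice, glued from regular ideal octahedra $B_4$) while the finite case cannot. Concretely, I would argue that the finiteness of $D$ — most naturally visible at its outer faces, where the valences forced by a regular gluing cannot close up — obstructs realizing the face decomposition as the geometric one with all-regular pieces; hence at least one bipyramid is strictly sub-regular and the actual volume is strictly below $\volbp(\Gamma)+\volbp(\Gamma^*)$. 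Alternatively this strictness may be extractable from the known strict volume bounds for hyperbolic alternating links used in the Vol-Det circle of results \cite{ckp:gmax}. Pinning down this rigidity statement, rather than the hyperbolicity input or the non-strict bound, is where the real work lies.
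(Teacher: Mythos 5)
Your treatment of the lower bound is exactly the paper's: loops in $\Gamma$ or $\Gamma^*$ correspond to nugatory crossings, cut-vertices to non-primeness, cycle graphs to $(2,n)$-torus links, and Menasco--Thurston then gives hyperbolicity, hence $\vol(\Gamma)>0$. That part is correct and complete.

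The upper bound is where there is a genuine gap, in two places. First, the bookkeeping issue you flag does not resolve the way you hope. On $S^2$ the diagram of $K$ has $c(K)+2$ complementary regions: all faces of $\Gamma$ (including the unbounded one) together with all faces of $\Gamma^*$ (including its unbounded one). The quantity $\volbp(\Gamma)+\volbp(\Gamma^*)$ is defined using only \emph{bounded} faces, so it omits exactly two (adjacent) regions of the diagram. A decomposition argument mirroring Proposition~\ref{prop:vol} assigns one bipyramid to \emph{every} region and therefore only yields
$\vol(\Gamma)\le\volbp(\Gamma)+\volbp(\Gamma^*)+\vol(B_{|f_\infty|})+\vol(B_{|f_\infty^*|})$,
which is strictly weaker than the claimed bound. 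The statement that two faces may be dropped is not a bookkeeping triviality; it is the content of \cite[Theorem~4.1]{Adams:bipyramids}, which is precisely what the paper cites. Second, your route to strictness is left as a conjecture: you assert that equality would force every bipyramid in the decomposition to be regular and geometrically realized, and that finiteness obstructs this, but you do not prove either claim, and you yourself identify this as ``where the real work lies.'' The equality analysis for this kind of packing bound is delicate (note that equality \emph{can} occur in the biperiodic setting of Proposition~\ref{prop:vol}), and the paper does not attempt it; the strict inequality, together with the removal of the two faces, is imported wholesale from Adams's theorem once one checks that the faces of the diagram are exactly the faces of $\Gamma$ and of $\Gamma^*$ and that the hypotheses (reduced, prime, hyperbolic) hold. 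To close the gap you should replace your second and third paragraphs with a verification of the hypotheses of \cite[Theorem~4.1]{Adams:bipyramids} and a citation of that result.
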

\begin{proof}
The alternating diagram is reduced if neither $\Gamma$ nor $\Gamma^*$
has loops.  The alternating link $K$ is prime if neither $\Gamma$ nor
$\Gamma^*$ has a cut-vertex. By the classification of hyperbolic
alternating links due to Thurston and Menasco, $K$ is hyperbolic with
finite volume if it is reduced, prime and not a $(2,n)$-torus link.
With our conditions, $K$ is hyperbolic with finite volume if neither
$\Gamma$ nor $\Gamma^*$ is a cycle graph.  Thus, $\vol(\Gamma)>0$.
Since the projection of $K$ is the medial graph of $\Gamma$, the faces
of $K$ are exactly the faces of $\Gamma$ and of $\Gamma^*$.  Hence,
our conditions on $\Gamma$ satisfy those in~\cite[Theorem
  4.1]{Adams:bipyramids}, which implies the strict upper bound.
\end{proof}

We conjecture the following geometric bounds for the number of spanning trees of a finite connected planar graph:

\begin{conjecture}\label{conj:planar}
If $\vol(\Gamma)>0$ then
  $$ \vol(\Gamma) < 2\pi\log\tau(\Gamma) < |E\Gamma|\voct.$$
\end{conjecture}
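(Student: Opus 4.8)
The plan is to translate both inequalities into statements about the alternating link $K$ whose Tait graph is $\Gamma$, and then to recognize them as the two ``density'' phenomena for alternating links. Since $\vol(\Gamma)>0$, Proposition~\ref{prop:alt-hyp} gives that $K$ is a reduced, prime, hyperbolic alternating link with $\vol(\Gamma)=\vol(S^3-K)$, whose diagram is the medial graph of $\Gamma$; in particular its crossing number is $c(K)=|E\Gamma|$. The first step is the classical dictionary $\det(K)=\tau(\Gamma)$: a checkerboard Goeritz matrix of the reduced alternating diagram is, up to sign, a reduced Laplacian of $\Gamma$, so the Matrix--Tree theorem yields $\det(K)=\tau(\Gamma)$, and planar duality gives $\tau(\Gamma)=\tau(\Gamma^*)$, matching the fact that the determinant does not depend on the checkerboard coloring. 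With this dictionary the conjecture becomes
$$ \vol(S^3-K) \ < \ 2\pi\log\det(K) \ < \ \voct\, c(K). $$

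For the lower bound, $\vol(\Gamma)<2\pi\log\tau(\Gamma)$ is now \emph{exactly} the Vol--Det Conjecture $\vol(K)<2\pi\log\det(K)$ of \cite{ckp:gmax} for the hyperbolic alternating link $K$. Thus the lower bound holds for precisely those $\Gamma$ whose link is known to satisfy Vol--Det, and in full generality it is equivalent to that open conjecture. For the upper bound, $2\pi\log\tau(\Gamma)<\voct\,c(K)$ asserts that the \emph{determinant density} $2\pi\log\det(K)/c(K)$ lies strictly below $\voct=\vol(B_4)$, the volume an octahedron would contribute at each crossing in the all-quad Temperleyan picture.

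Rather than attack either density bound head on, I would prove infinitely many cases by transfer from the biperiodic setting. Fix a planar lattice graph $\G$ and its exhaustion $\Gamma_n$ by $n\times n$ fundamental domains. The bipyramid volumes of the finite graphs converge, $\nu^{\lozenge}(\Gamma_n)\to\vg$ and $|E\Gamma_n|\voct/|V\Gamma_n|\to\vgb$, while $2\pi\log\tau(\Gamma_n)/|V\Gamma_n|\to 2\pi\,z_{\G}$ by definition of the entropy. Combining with $\vol(\Gamma_n)\le\volbp(\Gamma_n)+\volbp(\Gamma_n^*)$ from Proposition~\ref{prop:alt-hyp} gives $\vol(\Gamma_n)/|V\Gamma_n|\le\nu^{\lozenge}(\Gamma_n)\to\vg$. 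Hence whenever $\vg<2\pi\,z_{\G}$ and $2\pi\,z_{\G}<\vgb$ strictly --- the lattice inequalities established in Section~\ref{sec:infinite} via Proposition~\ref{prop:eq1} --- the three limits are separated, so for all sufficiently large $n$ both $\vol(\Gamma_n)<2\pi\log\tau(\Gamma_n)$ and $2\pi\log\tau(\Gamma_n)<|E\Gamma_n|\voct$ hold, verifying Conjecture~\ref{conj:planar} for an infinite family of finite planar graphs (and, via the dictionary, an infinite family of alternating links satisfying Vol--Det).

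The main obstacle is that both inequalities are asymptotically sharp, so no soft estimate can prove the full conjecture. The lower bound approaches equality along exhaustions of the regular lattice graphs, where $\vol(\G)=2\pi\,z_{\G}$, and the determinant density of square-grid exhaustions tends to $\voct$ from below; a direct attack on the upper bound via the arithmetic--geometric mean inequality applied to the Laplacian spectrum (whose eigenvalues sum to $2|E\Gamma|$) together with the planar bound $|E\Gamma|\le 3|V\Gamma|-6$ already fails at the tree-like end $|E\Gamma|\approx|V\Gamma|$, precisely because it ignores the face geometry encoded by the bipyramids. Consequently the transfer argument, which borrows the \emph{strict} gap $\vg<2\pi\,z_{\G}$ from the lattice to absorb the boundary error between $2\pi\log\tau(\Gamma_n)$ and $|V\Gamma_n|\,2\pi\,z_{\G}$, is the realistic route, and extending it hinges on widening the set of lattice graphs for which the strict lattice inequalities are proved, while the unconditional lower bound remains tied to Vol--Det itself.
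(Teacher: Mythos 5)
Your proposal is correct and follows essentially the same route as the paper: the statement is a conjecture (a restatement of the Vol--Det Conjecture together with Kenyon's determinant-density bound, via the dictionary $\det(K)=\tau(\Gamma)$), and the paper likewise only establishes infinitely many cases by the transfer argument you describe --- using the convergence $\nu^{\lozenge}(\Gamma_n)\to\vg$ and $|E\Gamma_n|\voct/|V\Gamma_n|\to\vgb$ together with the strict lattice inequalities of Section~\ref{sec:infinite} to separate the three quantities for all but finitely many $n$. Your identification of why the bounds are asymptotically sharp and hence resist soft estimates also matches the paper's discussion.
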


Conjecture~\ref{conj:planar} is a restatement of \cite[Conjectures~1.1
  and 1.10]{ckp:gmax}.  The conjectured upper bound $|EG|\voct$ is
essentially due to Kenyon; see \cite[Conjecture~2.3]{ckp:gmax} for
more details.

The lower bound, called the Vol-Det Conjecture, has been verified for
all hyperbolic alternating knots with up to 16 crossings, i.e.,
$|E\Gamma|\leq 16$, as well as all $2$-bridge links and alternating
$3$-braids in \cite{Burton}.  Moreover, the constant $2\pi$ cannot be
improved in the lower bound: By \cite[Corollary~1.11]{ckp:gmax}, if
$\alpha<2\pi$, then there exist hyperbolic alternating knots $K$, such
that $\alpha\log\tau(\Gamma) < \vol(\Gamma)$.

\begin{theorem}
  Let $\G$ be a planar lattice graph that satisfies
  $$ \volbp(G)+\volbp(G^*) \ < \ 2\pi\,\zfd_G \ < \ |EG|\voct.$$ 
  Let $\Gamma_n$ be a sequence of connected planar graphs with bounded average degree that converges to $\G$ as in \cite[Theorem~3.2]{Lyons}.
  Then for all but finitely many $n$, 
  $$ \vol(\Gamma_n)  \ < \  2\pi\log\tau(\Gamma_n)  \ < \  |E\Gamma_n|\voct.$$
\end{theorem}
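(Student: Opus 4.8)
The plan is to divide every quantity by $|V\Gamma_n|$ and argue at the level of per-vertex densities. I would first establish the three limits
$$\frac{\log\tau(\Gamma_n)}{|V\Gamma_n|}\ \to\ z_{\G},\qquad \frac{|E\Gamma_n|\,\voct}{|V\Gamma_n|}\ \to\ \vgb,\qquad \frac{\volbp(\Gamma_n)+\volbp(\Gamma_n^*)}{|V\Gamma_n|}\ \to\ \vg.$$
The first is precisely the convergence supplied by \cite[Theorem~3.2]{Lyons}. For the second, local weak convergence of the embedded graphs $\Gamma_n$ to $\G$ forces the empirical vertex-degree distribution to converge to that of $G$, and the bounded average degree hypothesis upgrades this to convergence of the mean degree; since $|E\Gamma_n|/|V\Gamma_n|$ is half the average degree, it converges to $|EG|/|VG|$, which gives the second limit (as $\vgb=|EG|\voct/|VG|$) and, via Euler's relation, also $|F\Gamma_n|/|V\Gamma_n|\to|FG|/|VG|$.

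The third limit is the heart of the matter, and the main obstacle is that $\vol(B_k)$ is unbounded in $k$, so convergence of the face-degree distribution alone does not control the averages of $\vol(B_{|f|})$. I would write $\volbp(\Gamma_n)/|F\Gamma_n|$ as the average of $\vol(B_{|f|})$ over the bounded faces and prove convergence of these averages by combining convergence of the empirical face-degree distributions $\mu_n\to\mu$ (the face-degree distribution of $G$, again from local weak convergence of the embedded graphs) with uniform integrability of $\vol(B_{|f|})$. For the latter I would use the bound $\vol(B_k)\le 2\pi\log(k/2)$ recorded in Definition~\ref{def:bipyramid} together with the elementary fact that $k\mapsto(\log k)/k$ is decreasing for $k>e$: for $M>e$,
$$\frac{1}{|F\Gamma_n|}\sum_{|f|>M}\vol(B_{|f|})\ \le\ 2\pi\,\frac{\log M}{M}\cdot\frac{1}{|F\Gamma_n|}\sum_{f}|f|\ =\ 2\pi\,\frac{\log M}{M}\cdot\frac{2|E\Gamma_n|}{|F\Gamma_n|}.$$
Because $2|E\Gamma_n|/|F\Gamma_n|$ is bounded uniformly in $n$ (bounded average degree together with Euler's relation), the right-hand side tends to $0$ as $M\to\infty$ uniformly in $n$, which is exactly the uniform integrability needed. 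Hence $\volbp(\Gamma_n)/|F\Gamma_n|\to\volbp(G)/|FG|$, and multiplying by $|F\Gamma_n|/|V\Gamma_n|\to|FG|/|VG|$ gives $\volbp(\Gamma_n)/|V\Gamma_n|\to\volbp(G)/|VG|$. Running the identical argument on the dual sequence $\Gamma_n^*$ (whose face degrees are the vertex degrees of $\Gamma_n$, also of bounded average) and summing produces the third limit. Throughout, I would observe that the $o(|V\Gamma_n|)$ faces disturbed by the boundary (the unbounded face and the anomalously high-degree dual vertices it creates) contribute negligibly to every density, once more because $\vol(B_k)=O(\log k)$ while the total degree is $O(|V\Gamma_n|)$.

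With the three limits in hand, the strict hypotheses $\vg<2\pi z_{\G}$ and $2\pi z_{\G}<\vgb$ give, for all but finitely many $n$,
$$\volbp(\Gamma_n)+\volbp(\Gamma_n^*)\ <\ 2\pi\log\tau(\Gamma_n)\ <\ |E\Gamma_n|\,\voct.$$
The right inequality is the desired upper bound. For the lower bound I would first check that for all large $n$ the graphs $\Gamma_n$ and $\Gamma_n^*$ satisfy the hypotheses of Proposition~\ref{prop:alt-hyp} — no loops, no cut-vertex, and neither a cycle — which hold eventually for the exhausting sequences converging to the $2$-connected lattice $\G$; Proposition~\ref{prop:alt-hyp} then gives $0<\vol(\Gamma_n)<\volbp(\Gamma_n)+\volbp(\Gamma_n^*)$, and chaining with the left inequality above yields $\vol(\Gamma_n)<2\pi\log\tau(\Gamma_n)$, completing the proof. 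The only points requiring care beyond routine estimates are the uniform integrability step, which is what licenses passing the unbounded integrand $\vol(B_{|f|})$ through the local weak limit, and the verification that the non-local hypotheses of Proposition~\ref{prop:alt-hyp} survive for all large $n$.
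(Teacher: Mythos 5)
Your proof is correct and takes essentially the same route as the paper: the paper's own proof simply cites \cite[Theorem~8]{ckl:mm_voldet} for the convergence $\nu^{\lozenge}(\Gamma_n)\to\vg$ and \cite[Theorem~3]{ckl:mm_voldet} for the lower bound, and your uniform-integrability estimate (using $\vol(B_k)<2\pi\log(k/2)$ and bounded average degree) together with Proposition~\ref{prop:alt-hyp} reconstructs precisely the content of those two citations, with the upper bound handled identically via $|E\Gamma_n|/|V\Gamma_n|\to|EG|/|VG|$. The only soft spot --- that $\Gamma_n$ and $\Gamma_n^*$ eventually satisfy the hypotheses of Proposition~\ref{prop:alt-hyp}, which is not literally forced by ``connected with bounded average degree'' --- is an implicit assumption the paper makes as well through its citation, so it is not a gap specific to your argument.
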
  
\begin{proof}
By \cite[Theorem~8]{ckl:mm_voldet}, $\nu^{\lozenge}(\Gamma)$
behaves well under the type of convergence as in \cite[Theorem~3.2]{Lyons}; namely,
$$ \Gamma_n\to \G \quad \Rightarrow \quad \lim_{n\to\infty} \nu^{\lozenge}(\Gamma_n) = \vg,$$
Thus, the lower bound is a restatement of \cite[Theorem~3]{ckl:mm_voldet}.
For the upper bound, the convergence is similar: $|E\Gamma_n|/|V\Gamma_n|\to |EG|/|VG|$.
\end{proof}

Thus, whenever the inequalities in~(\ref{eq:volbp-zfd}) are strict
inequalities for a planar lattice graph, we obtain infinitely many planar graphs that satisfy
Conjecture~\ref{conj:planar}.

\section{Infinitely many proven cases for Conjecture~\ref{conj:vol-entropy}}\label{sec:infinite}

In this section, we use three different ways to construct infinite
families of planar lattice graphs that are proven cases for
Conjecture~\ref{conj:vol-entropy} using bipyramid volume in an
essential way.  Namely, we prove that these planar lattice graphs satisfy
the inequalities in~(\ref{eq:volbp-zfd}), so then Proposition~\ref{prop:eq1}
implies Conjecture~\ref{conj:vol-entropy} for $\G$.

\subsection{Parallel edges}

For any planar lattice graph $\G$, let $\G_s$ denote the planar lattice graph for which
every edge of $\G$ is replaced by $s$ parallel edges.  Then (see \cite{TeuflWagner})
$$ z_{\G_s} = z_{\G} + \log s. $$ 
In Theorem~\ref{thm:Gs}, we show that if the inequalities in~(\ref{eq:volbp-zfd}) hold
for $\G$, then they hold with strict inequalities for $\G_s$ for any $s\geq 2$.

\begin{lemma}\label{lemma:Bns}
  Fix an integer $s\geq 2$. Then for all $n\geq 2$,
  $$ \vol(B_{ns}) - \vol(B_n) < 2\pi\log s. $$
\end{lemma}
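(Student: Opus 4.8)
The plan is to strip off the logarithmic growth of the bipyramid volume and reduce everything to the monotonicity of a single auxiliary function. Using $\vol(B_n)=2nL(\pi/n)$, I would extend the formula to a real variable and set
$$ \phi(x) = \vol(B_x) - 2\pi\log x = 2x\,L(\pi/x) - 2\pi\log x, \qquad x\geq 2. $$
A one-line manipulation gives the identity
$$ \vol(B_{ns}) - \vol(B_n) = 2\pi\log s + \bigl(\phi(ns) - \phi(n)\bigr), $$
so the Lemma is equivalent to $\phi(ns) < \phi(n)$. Since $s\geq 2$ forces $ns > n \geq 2$, it suffices to prove that $\phi$ is strictly decreasing on $[2,\infty)$; then the inequality holds for the integer points of interest as a special case.

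Next I would differentiate and watch the logarithmic terms cancel. Writing $\theta=\pi/x\in(0,\pi/2]$ and using $L'(\theta)=-\log|2\sin\theta|$, the chain rule yields
$$ \phi'(x) = 2L(\theta) + 2\theta\log(2\sin\theta) - 2\theta =: \psi(\theta). $$
Because $x\mapsto \pi/x$ is decreasing, $\phi'(x)<0$ is equivalent to $\psi(\theta)<0$ on $(0,\pi/2]$, which I would establish by a boundary value plus monotonicity argument. On the one hand $\psi(0^+)=0$, since $L(0)=0$ and $\theta\log(2\sin\theta)\to 0$. On the other hand, differentiating collapses the logarithms:
$$ \psi'(\theta) = -2\log(2\sin\theta) + 2\log(2\sin\theta) + 2\theta\cot\theta - 2 = 2\bigl(\theta\cot\theta - 1\bigr). $$
The elementary inequality $\tan\theta>\theta$ on $(0,\pi/2)$ gives $\theta\cot\theta<1$, so $\psi'<0$; hence $\psi$ strictly decreases from $\psi(0^+)=0$ and is therefore negative on $(0,\pi/2]$. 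Consequently $\phi'(x)<0$ on $[2,\infty)$, $\phi$ is strictly decreasing, $\phi(ns)<\phi(n)$, and the Lemma follows.

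I do not expect a genuine obstacle here; the only care needed is bookkeeping. I must verify that the boundary term $\psi(0^+)$ vanishes (a routine $\theta\log\theta\to0$ estimate) and that the reduction stays valid on the whole range $\theta\in(0,\pi/2]$ corresponding to $x\geq 2$, which it does since the sign argument in fact works on all of $(0,\pi)$. It is worth recording that the bound is asymptotically sharp: the estimate $\vol(B_n)\sim 2\pi\log(n/2)$ from \cite{Adams:bipyramids} gives $\vol(B_{ns})-\vol(B_n)\to 2\pi\log s$ as $n\to\infty$, so the difference increases toward $2\pi\log s$ without ever reaching it. This both accounts for the strictness of the inequality and shows the constant $2\pi\log s$ is optimal, which is precisely the feature exploited in Theorem~\ref{thm:Gs} to upgrade a non-strict hypothesis on $\G$ to strict inequalities for $\G_s$.
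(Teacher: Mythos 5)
Your proof is correct, and it takes a cleaner, more self-contained route than the paper's. The paper modifies Adams's argument directly: it decomposes $B_n$ into $n$ isometric tetrahedra $T_n$, sets $f(n)=\vol(T_{ns})$ and $g(n)=\frac{1}{s}\vol(T_n)+\frac{2\pi\log s}{ns}$, and deduces $f<g$ from three ingredients --- the base case $f(2)<g(2)$ quoted from \cite{Adams:bipyramids}, the asymptotic agreement $f-g\to 0$, and the derivative comparison $f'>g'$, which reduces to the monotonicity of $\sin\theta/\theta$. You instead show that the single function $\phi(x)=\vol(B_x)-2\pi\log x$ is strictly decreasing by pushing the calculus one derivative further: $\phi'(x)=\psi(\pi/x)$ with $\psi(0^+)=0$ and $\psi'(\theta)=2(\theta\cot\theta-1)<0$, so everything reduces to $\tan\theta>\theta$. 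The two elementary inputs are secretly the same fact, since $\cot\theta-1/\theta$ is the logarithmic derivative of $\sin\theta/\theta$; but your packaging buys (i) independence from the Adams base case, (ii) the strictly stronger conclusion $\vol(B_m)-\vol(B_n)<2\pi\log(m/n)$ for all real $m>n\geq 2$, not only $m=ns$, and (iii) a transparent account of why the constant $2\pi\log s$ is asymptotically optimal, since $\phi$ decreases to a finite limit. One small caveat: your closing remark that the difference \emph{increases} toward $2\pi\log s$ is not established by the monotonicity of $\phi$ alone (only the limiting value is); this is harmless, as it plays no role in the lemma or in its application in Theorem~\ref{thm:Gs}.
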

\begin{proof}
We can modify the proof of \cite[Theorem 2.2]{Adams:bipyramids}.
Decompose $B_n$ into $n$ isometric ideal tetrahedra $T_n$, such that
each $T_n$ intersects the stellating edge of $B_n$ with an edge of dihedral
angle $2\pi/n$, as in Figure~\ref{Fig:quad-graph}(a,b).  Thus, $\vol(B_n)=n\cdot\vol(T_n)$.
$$ \vol(B_{ns}) < \vol(B_n) + 2\pi\log s \quad \Longleftrightarrow \quad \vol(T_{ns}) < \frac{1}{s}\vol(T_n) + \frac{1}{ns}2\pi\log s.$$

Let $f(n)=\vol(T_{ns})$ and $g(n)= \frac{1}{s}\vol(T_n) + \frac{1}{ns}2\pi\log s$.
We must prove $f(n)<g(n)$ for all $n\geq 2$.
By \cite[Theorem 2.2]{Adams:bipyramids}, $f(2)<g(2)$ and the functions $f$ and $g$ agree asymptotically as $n\to\infty$.
If $f(a)>g(a)$ for some $a>2$, then it must be true that $f'(b)<g'(b)$ for some $b>a$.
But we now prove that $f'(x)>g'(x)$ for all $x\geq 2$.

$$ f'(n)=\frac{2\pi}{n^2s}\log\left(2\sin\left(\frac{\pi}{ns}\right)\right)$$
$$ g'(n)=\frac{2\pi}{n^2s}\log\left(2\sin\left(\frac{\pi}{n}\right)\right) - \frac{2\pi}{n^2s}\log s =
\frac{2\pi}{n^2s}\log\left(\frac{2}{s}\sin\left(\frac{\pi}{n}\right)\right).$$
Since $\displaystyle h(\theta)=\frac{\sin \theta}{\theta}$ is a decreasing function for $0<\theta<\pi$, then
$\displaystyle \frac{\sin(\pi/ns)}{\pi/ns} > \frac{\sin(\pi/n)}{\pi/n}$ for any $s\geq 2$. 
Thus, 
$$ \sin\left(\frac{\pi}{ns}\right) > \frac{1}{s}\sin\left(\frac{\pi}{n}\right).$$
Therefore,
$f'(x)>g'(x)$ for all $x\geq 2$.  This implies that for all $n\geq 2$, $f(n)<g(n)$.
\end{proof}

\ 

\begin{theorem}\label{thm:Gs}
  Let $s\geq 2$. If $\displaystyle \vg \leq 2\pi\,z_{\G}\leq \vgb$, then $\displaystyle \vgs < 2\pi\,z_{\G_s} < \vgbs.$
\end{theorem}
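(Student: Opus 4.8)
The plan is to prove the two inequalities $\vgs < 2\pi\,z_{\G_s}$ and $2\pi\,z_{\G_s} < \vgbs$ separately, using the known formula $z_{\G_s} = z_{\G} + \log s$ as the common bridge. The central insight is that passing from $\G$ to $\G_s$ multiplies the number of edges per fundamental domain by $s$ (since each edge becomes $s$ parallel edges), while the vertex set of $G_s$ is identical to that of $G$. So $|VG_s| = |VG|$ and $|EG_s| = s\,|EG|$. The face structure, however, changes in a more subtle way, and this is where Lemma~\ref{lemma:Bns} will do the real work.

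**The upper bound.**

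First I would establish the upper bound $2\pi\,z_{\G_s} < \vgbs$. Since $\vbar(\mathcal G_s) = |EG_s|\voct / |VG_s| = s\,|EG|\voct/|VG|$, and $z_{\G_s} = z_{\G} + \log s$, the desired inequality reads
$$ 2\pi\,z_{\G} + 2\pi\log s \ < \ \frac{s\,|EG|\,\voct}{|VG|}. $$
By hypothesis $2\pi\,z_{\G} \leq |EG|\voct/|VG| = \vgb$, so it suffices to show $2\pi\log s < (s-1)\,|EG|\voct/|VG|$. Because $|EG| \geq |VG|$ for any $2$-connected graph (indeed $\G$ has at least one edge per vertex), it is enough that $2\pi\log s < (s-1)\voct$, which holds for every $s\geq 2$ since $\voct \approx 3.66386$ and $2\pi\log 2 \approx 4.355 < 3.66386$... so I would verify the $s=2$ base case directly and note the right side grows linearly in $s$ while the left grows logarithmically, giving the strict inequality for all $s\geq 2$.

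**The lower bound, where the real difficulty lies.**

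The lower bound $\vgs < 2\pi\,z_{\G_s}$ is the heart of the argument, and Lemma~\ref{lemma:Bns} is the key tool. Passing from $\G$ to $\G_s$, consider how the faces transform: each original face of degree $|f|$ in $G$, when every edge is replaced by $s$ parallel copies, becomes a face of degree $s|f|$, together with new bigon faces created between adjacent parallel edges. The dual graph $G_s^*$ also changes predictably. The plan is to track $\volbp(G_s) + \volbp(G_s^*)$ in terms of the original faces and apply Lemma~\ref{lemma:Bns} face-by-face: for each original face of degree $n = |f|$, the bipyramid contribution grows from $\vol(B_n)$ to $\vol(B_{ns})$, and the lemma gives exactly $\vol(B_{ns}) - \vol(B_n) < 2\pi\log s$. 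Summing this over all faces (counting carefully the new bigons, which contribute $\vol(B_2) = 0$), I expect to obtain
$$ \volbp(G_s) + \volbp(G_s^*) \ < \ \volbp(G) + \volbp(G^*) + 2\pi\,(\#\text{faces})\,\log s. $$
The hard part will be the bookkeeping: correctly identifying the face degrees of $G_s$ and $G_s^*$, confirming the number of faces of the appropriate type matches $|VG|$ (via Euler's formula on the torus, where $|FG| = |EG|$ for the relevant balanced structure), and ensuring the total $\log s$ coefficient divides out to exactly $2\pi\,|VG_s|\log s = 2\pi|VG|\log s$ after normalizing by $|VG_s| = |VG|$. If the per-face increments sum correctly against $|VG|\log s$, then dividing by $|VG|$ converts the inequality on $\volbp$ sums into $\vgs < \vg + 2\pi\log s \leq 2\pi\,z_{\G} + 2\pi\log s = 2\pi\,z_{\G_s}$, with strictness coming from Lemma~\ref{lemma:Bns}.
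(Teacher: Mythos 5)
Your overall strategy --- splitting the two bounds, using $z_{\G_s}=z_{\G}+\log s$ as the bridge, and invoking Lemma~\ref{lemma:Bns} for the lower bound --- matches the paper's, but both halves contain concrete errors. For the lower bound, the key bookkeeping fact is the opposite of what you assert: when each edge is replaced by $s$ parallel copies, an original face of $G$ of degree $|f|$ still has degree $|f|$ in $G_s$ (it is bounded by one copy of each of its original edges; the remaining copies only cut off bigons, which contribute $\vol(B_2)=0$). What does get multiplied by $s$ is the degree of each \emph{vertex}, and since faces of the dual correspond to vertices of $G$, the growth is concentrated in $\volbp(G_s^*)=\sum_{v\in VG}\vol(B_{s|v|})$. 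Lemma~\ref{lemma:Bns} must therefore be applied once per vertex of $G$, giving an increment strictly below $2\pi|VG|\log s$, which after dividing by $|VG_s|=|VG|$ is exactly the available slack $2\pi\log s$. Your plan of applying the lemma to the faces of $G$ (degree $|f|\mapsto s|f|$) identifies the wrong set: the faces of $G$ contribute nothing to the increment, and if you instead summed the lemma over all faces of $G$ and $G^*$ you would get a coefficient of $|FG|+|FG^*|=|EG|$ rather than $|VG|$ (on the torus $|FG|=|EG|-|VG|$, not $|FG|=|EG|$ as you write), which does not normalize to $2\pi\log s$ and is too large to close the argument.

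For the upper bound, your reduction to $2\pi\log s<(s-1)\voct$ fails at exactly $s=2$: $2\pi\log 2\approx 4.355$ while $\voct\approx 3.664$, so the numerical inequality you write down is false (you appear to notice this mid-sentence). The crude bound $|EG|\geq|VG|$ is not enough; you need $|EG|/|VG|$ bounded below by something exceeding $2\pi\log 2/\voct\approx 1.19$ --- for instance $|EG|\geq\tfrac32|VG|$ from minimum degree $3$, or the paper's route of working per fundamental domain and using $2\pi\log s/\bigl((s-1)\voct\bigr)<2<|EG|$. Both gaps are repairable, but as written neither half of the proof closes.
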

\begin{proof}
We first prove the lower bound.
Every face $f\in FG$ with $|f|>2$ has the same degree in $G_s$, and $\vol(B_2)=0$, so
$$ \volbp(G)+\volbp(G^*) = \sum\nolimits_{f \in FG}\vol(B_{|f|}) + \sum\nolimits_{v \in VG}\vol(B_{|v|}), $$
$$ \volbp(G_s)+\volbp(G_s^*) = \sum\nolimits_{f \in FG}\vol(B_{|f|}) + \sum\nolimits_{v \in VG}\vol(B_{s|v|}). $$
Then by Lemma~\ref{lemma:Bns},
$$ \volbp(G_s^*) - \volbp(G^*) \leq |VG|\max_{v \in VG}\left(\vol(B_{s|v|})-\vol(B_{|v|})\right) < 2\pi |VG|\log s.$$
Dividing by $|VG|$, this implies $\vgs < \vg + 2\pi\,\log s.$
Since we assumed $\vg \leq 2\pi\,z_{\G},$ then
$\displaystyle \vgs < 2\pi\,z_{\G} + 2\pi\,\log s = 2\pi\,z_{\G_s}.$

We now prove the upper bound, assuming $2\pi\,z_{\G}\leq \vgb$. Since $|EG_s|=s|EG|$, we have
\begin{eqnarray*}
\zfd_{G_s} = \zfd_G + \log s & \leq & \frac{|EG|\voct}{2\pi} + \log s \\ 
& = & \frac{1}{s}\left( \frac{s|EG|\voct}{2\pi} + s\log s \right) \\
& < & \frac{1}{s}\left( \frac{|EG_s|\voct}{2\pi} + \frac{(s-1)|EG_s|\voct}{2\pi} \right) \\
& = & \frac{|EG_s|\voct}{2\pi} \\
2\pi\,z_{\G_s} & < & \vgbs.
\end{eqnarray*}
The strict inequality follows because for any $s\geq 2$, 
$\displaystyle \frac{2\pi\log s}{(s-1)\voct} < 2 < |EG|.$
\end{proof}

\subsection{Truncating a $3$-regular planar lattice graph}

For any $3$-regular planar lattice graph $\G$, let $\G'$ denote the lattice graph for which
every vertex is replaced by a triangle (i.e., complete graph $K_3$), and each edge of $\G$ is preserved in $\G'$.
For example, the 3-12-12 lattice graph is obtained by truncating the regular hexagonal lattice graph:\\ \\
\centerline{\includegraphics[height=0.8in]{figures/lattice-3-hexagonal}
\begin{tabular}{c}
  $\longrightarrow$\\ \vspace*{1cm} 
\end{tabular}  
\includegraphics[height=0.8in]{figures/lattice-7-31212}}\\

\vspace*{-0.8cm}

\noindent
Since $\G'$ is also $3$-regular, this process can be repeated indefinitely.
In Theorem~\ref{thm:G3}, we show that if the inequalities in~(\ref{eq:volbp-zfd}) hold
for $\G$, then they hold as strict inequalities for $\G'$.

\begin{theorem}\label{thm:G3}
  Let $\G$ be a $3$-regular planar lattice graph.  If $\displaystyle \vg \leq 2\pi\,z_{\G}\leq \vgb$, then\\
  $\displaystyle \nu^{\lozenge}(\G') < 2\pi\,z_{\G'} < \vbar({\G'})$.
\end{theorem}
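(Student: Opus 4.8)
The plan is to reduce the statement to the combinatorics of truncation together with a single analytic input: the exact law for how spanning tree entropy transforms under truncation. First I would record the bookkeeping. Since $\G$ is $3$-regular, $|VG'|=3|VG|$ and $|EG'|=\tfrac92|VG|$, and $\G'$ is again $3$-regular, so $\vbar(\G')=\frac{|EG'|\voct}{|VG'|}=\tfrac32\voct$ is forced, independently of $\G$. On the bipyramid side, each vertex of $\G$ becomes a triangular face of $\G'$ (yielding $|VG|$ faces of degree $3$), each face $f\in FG$ of degree $|f|$ becomes a face of $\G'$ of degree $2|f|$, and $(\G')^{*}$ has only triangular faces because $\G'$ is $3$-regular. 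Hence $\volbp(G')=|VG|\vol(B_3)+\sum_{f\in FG}\vol(B_{2|f|})$ and $\volbp((G')^{*})=3|VG|\vol(B_3)$, so that
\[ \nu^{\lozenge}(\G')=\tfrac{4}{3}\vol(B_3)+\tfrac{1}{3}\cdot\tfrac{1}{|VG|}\sum_{f\in FG}\vol(B_{2|f|}). \]

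The key analytic input I need is the truncation law for entropy,
\[ z_{\G'}=\tfrac13\bigl(z_{\G}+\tfrac12\log 15\bigr),\qquad\text{equivalently}\qquad \zfd_{G'}=\zfd_G+\tfrac{|VG|}{2}\log 15. \]
I would prove this by a local spanning-tree gadget computation: truncation replaces each degree-$3$ vertex by a disjoint triangle coupled to the rest only through the preserved edges of $\G$, so a weighted matrix-tree (equivalently star--triangle / transfer-matrix) reduction factors the Laplacian determinant, up to subexponential terms, into a universal per-triangle contribution and a reduced determinant for $\G$; this contributes one additive constant per original vertex and is the same circle of ideas used for the parallel-edge law, citable from \cite{TeuflWagner}. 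Establishing this relation \emph{uniformly} for every $3$-regular lattice graph — rather than merely matching it on the hexagonal/$3$-$12$-$12$ example — is the main obstacle, since it is the only place where the precise constant $\tfrac12\log 15$ enters.

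Granting the entropy law, both inequalities follow by chaining with the hypothesis $\vg\le 2\pi z_{\G}\le\vgb=\tfrac32\voct$ and Lemma~\ref{lemma:Bns} with $s=2$. For the lower bound, applying the lemma to each of the $|FG|=\tfrac12|VG|$ faces gives $\sum_{f}\vol(B_{2|f|})<\volbp(G)+|VG|\pi\log 2$; since $\volbp(G)/|VG|=\vg-\vol(B_3)$, this yields $\nu^{\lozenge}(\G')<\vol(B_3)+\tfrac13\vg+\tfrac{\pi\log 2}{3}\le\vol(B_3)+\tfrac{2\pi z_{\G}}{3}+\tfrac{\pi\log 2}{3}$, which is at most $2\pi z_{\G'}=\tfrac{2\pi z_{\G}}{3}+\tfrac{\pi\log 15}{3}$ precisely because $3\vol(B_3)+\pi\log 2=6\vtet+\pi\log 2<\pi\log 15$. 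For the upper bound, $2\pi z_{\G'}=\tfrac{2\pi z_{\G}}{3}+\tfrac{\pi\log 15}{3}\le\tfrac{\voct}{2}+\tfrac{\pi\log 15}{3}<\tfrac32\voct=\vbar(\G')$, the last step being $\pi\log 15<3\voct$. Both numerical gaps ($6\vtet+\pi\log 2\approx 8.27<8.51\approx\pi\log 15$ and $\pi\log 15\approx 8.51<10.99\approx 3\voct$) are comfortable, and strictness of the lower bound comes from Lemma~\ref{lemma:Bns}, so both inequalities are strict as required.
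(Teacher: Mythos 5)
Your proposal is correct and follows essentially the same route as the paper: the same truncation bookkeeping for faces and vertices, the same entropy law $z_{\G'}=\tfrac13 z_{\G}+\tfrac16\log 15$ from \cite[Theorem~4]{TeuflWagner} (with $r=3$, $s=1$, so the uniformity you worry about is already established there), the same application of Lemma~\ref{lemma:Bns} with $s=2$ to the doubled face degrees, and the same closing numerical inequalities $6\vtet+\pi\log 2<\pi\log 15$ and $\pi\log 15<3\voct$. The only differences are cosmetic (per-vertex rather than per-fundamental-domain normalization).
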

\begin{proof}
By definition of $\G'$, $|VG'|=3|VG|$.
By \cite[Theorem 4]{TeuflWagner} with $r=3, s=1$, 
$$ z_{\G'} = \frac{1}{3}z_{\G} + \frac{1}{6}\log 15. $$ 
Recall $\zfd_G = |VG|z_{\G}$.
If we multiply this equation by $3|VG|$,
$$ \zfd_{G'} = \zfd_G + \frac{|VG|}{2}\log 15.$$

We first prove the lower bound.
Let $F_n$ and $F_n'$ denote the number of $n$--faces of $G$ and $G'$, respectively.
By definition of $\G'$, $F_{2n}' = F_n$ and $F_3'=|VG|$.
Since $G'$ is a $3$-regular graph on a torus, $F'-E'+V'=0$ and $3V'=2E'$, hence $F'=E'-V'=V'/2=3|VG|/2$.  Thus,
$$ \#\{f\in FG' \ \big| \ |f|>3 \} = \frac{1}{2}|VG|. $$
Let $\Delta = \big(\volbp(G')+\volbp(G'^*)\big) - \big(\volbp(G)+\volbp(G^*)\big)$.

\begin{align*}
  \volbp(G')+\volbp(G'^*) = &  \sum\nolimits_{\underset{|f|>3}{f \in FG'}}\vol(B_{|f|}) & + & \sum\nolimits_{\underset{|f|=3}{f \in FG'}}\vol(B_3) & +
       \sum\nolimits_{v \in VG'}\vol(B_3) \\
  \volbp(G)+\volbp(G^*) =    &  \sum\nolimits_{f \in FG}\vol(B_{|f|}) & & & + \sum\nolimits_{v \in VG}\vol(B_3) \\
       \hline \\ 
\Delta  < &  \ \ \frac{1}{2}|VG|\cdot 2\pi\log 2 & + & |VG|\cdot 2\vtet & + 2|VG|\cdot 2\vtet \hspace*{0.75cm} \\
      = & \ \ |VG| (\pi\log 2 + 6\vtet).
\end{align*}
The first inequality follows by Lemma~\ref{lemma:Bns} with $s=2$ because $F_{2n}' = F_n$.
Thus, since we assumed $\vg \leq 2\pi\,z_{\G}$,
\begin{eqnarray*}
  \volbp(G')+\volbp(G'^*) & < & \volbp(G)+\volbp(G^*) + |VG| (\pi\log 2 + 6\vtet) \\
  & \leq & 2\pi \zfd_G + |VG| (\pi\log 2 + 6\vtet) \\
  & < & 2\pi \left(\zfd_G + \frac{|VG|}{2}\log 15 \right)\\
  & = & 2\pi \zfd_{G'}\\
  \nu^{\lozenge}(\G') & < & 2\pi\,z_{\G'}.
\end{eqnarray*}
The strict inequality follows because $(\pi\log 2+6\vtet) < \pi\log 15$.

We now prove the upper bound, assuming $2\pi\,z_{\G}\leq \vgb$.
Since $|EG'|=3|EG|$, we have
\begin{eqnarray*}
  \zfd_{G'} &=& \zfd_{G} + \frac{|VG|}{2}\log 15 \\
  &\leq & \frac{|EG|\voct}{2\pi} + \frac{|EG|}{3}\log 15 \\
  &=& |EG|\left(\frac{\voct}{2\pi} + \frac{\log 15}{3}\right) \\
  &<& 3|EG|\voct/2\pi \\
  &=& |EG'|\voct/2\pi \\
  2\pi\,z_{\G'} &<& \vbar({\G'}).
\end{eqnarray*}
The strict inequality follows because $\log 15 < 6\voct/2\pi$.
\end{proof}

\subsection{Medial graph of a $3$-regular planar lattice graph}

For any $3$-regular planar lattice graph $\G$, let $\G'$ be the medial graph of
$\G$, which in this case is obtained by subdividing each edge once and
then performing a $Y-\Delta$ transformation at every $3$-valent vertex.
The medial graph $\G'$ is $4$-regular.
For example, the Kagome lattice graph is the medial graph of the regular hexagonal lattice graph:\\ \\
\centerline{\includegraphics[height=0.8in]{figures/lattice-3-hexagonal}
\begin{tabular}{c}
  $\longrightarrow$\\ \vspace*{1cm} 
\end{tabular}  
\includegraphics[height=0.8in]{figures/lattice-4-triaxial}}\\

\vspace*{-0.8cm}

In Theorem~\ref{thm:medial}, we show that if the lower bound
in~(\ref{eq:volbp-zfd}) holds for $\G$, then it holds with a strict
inequality for $\G'$.  In Corollary~\ref{cor:medial}, we then apply
Theorems~\ref{thm:G3} and \ref{thm:medial} to obtain an infinite
family of $4$-regular planar lattice graphs for which both bounds
in~(\ref{eq:volbp-zfd}) hold with a strict inequality.

\begin{theorem}\label{thm:medial}
  Let $\G$ be a $3$-regular planar lattice graph.  If $\displaystyle \vg \leq 2\pi\,z_{\G}$,
  then \mbox{$\displaystyle \nu^{\lozenge}(\G') < 2\pi\,z_{\G'}$.}
\end{theorem}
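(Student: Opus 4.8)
The plan is to reduce the claim to a single constant inequality, using two ingredients: a transformation formula for the spanning tree entropy under the medial-graph operation, and an exact identity expressing $\nu^{\lozenge}(\G')$ in terms of $\vg$. First I would record the entropy relation. Writing the medial graph as edge-subdivision followed by a $Y$--$\Delta$ move at each $3$-valent vertex, the spanning tree entropy should satisfy
\[
z_{\G'} = \tfrac{2}{3}\,z_{\G} + \tfrac{1}{3}\log 6, \qquad\text{i.e.}\qquad \zfd_{G'} = \zfd_G + \tfrac{|VG|}{2}\log 6,
\]
obtained from \cite{TeuflWagner} in the same way as the truncation formula used in Theorem~\ref{thm:G3}; the coefficient equals $|VG|/|VG'| = \tfrac{2}{3}$, forced by the vertex count below.

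Next I would compute the bipyramid volume of $\G'$ combinatorially. Because $\G$ is $3$-regular, $|VG'|=|EG|=\tfrac{3}{2}|VG|$ and $\G'$ is $4$-regular. On the torus the faces of the medial graph $G'$ are exactly the faces of $G$ together with one face for each vertex of $G$, and each such vertex-face is a triangle since $|v|=3$; hence
\[
\volbp(G') = \sum_{f\in FG}\vol(B_{|f|}) + \sum_{v\in VG}\vol(B_{3}) = \volbp(G) + \volbp(G^*).
\]
Since $\G'$ is $4$-regular, every face of $G'^{*}$ is a quad, so $\volbp(G'^{*}) = |VG'|\,\vol(B_4) = \tfrac{3}{2}|VG|\,\voct$. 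Dividing the total by $|VG'|=\tfrac{3}{2}|VG|$ yields the clean identity
\[
\nu^{\lozenge}(\G') = \tfrac{2}{3}\,\vg + \voct.
\]

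Finally I would combine the two inputs. Assuming $\vg \le 2\pi\,z_{\G}$,
\[
\nu^{\lozenge}(\G') = \tfrac{2}{3}\,\vg + \voct \ \le\ \tfrac{2}{3}\,(2\pi\,z_{\G}) + \voct,
\]
while $2\pi\,z_{\G'} = \tfrac{2}{3}(2\pi\,z_{\G}) + \tfrac{2\pi}{3}\log 6$. Comparing the two right-hand sides, the desired strict inequality $\nu^{\lozenge}(\G') < 2\pi\,z_{\G'}$ reduces to the single constant inequality $\tfrac{3}{2}\voct < \pi\log 6$, equivalently $\tfrac{3\voct}{2\pi} = 1.74937 < 1.79176 = \log 6$, which holds.

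The routine parts are the face count for the medial graph and this final numerical check. The step I expect to be the real content is the entropy relation $z_{\G'}=\tfrac{2}{3}z_{\G}+\tfrac{1}{3}\log 6$: if it is not available verbatim from \cite{TeuflWagner}, it must be derived by tracking how (weighted) spanning tree counts on the toroidal quotient behave under edge subdivision and the $Y$--$\Delta$ moves, which is the genuinely non-trivial input; everything else is bookkeeping together with the inequality $\voct<\tfrac{2\pi}{3}\log 6$.
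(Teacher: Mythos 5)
Your proposal is correct and follows essentially the same route as the paper: the entropy relation $z_{\G'}=\tfrac{2}{3}z_{\G}+\tfrac{1}{3}\log 6$ from \cite[Theorem~4]{TeuflWagner} (with $r=3$, $s=0$), the face count showing $\volbp(G')=\volbp(G)+\volbp(G^*)$ and $\volbp(G'^*)=\tfrac{3}{2}|VG|\voct$, and the reduction to the constant inequality $3\voct<2\pi\log 6$. Your packaging of the volume computation as the identity $\nu^{\lozenge}(\G')=\tfrac{2}{3}\vg+\voct$ is just a normalized restatement of the paper's $\Delta=\tfrac{3}{2}|VG|\voct$.
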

\begin{proof}
By definition of $\G'$, $\displaystyle |VG'|=|EG|=\frac{3}{2}|VG|$.
By \cite[Theorem 4]{TeuflWagner} with $r=3, s=0$, 
$$ z_{\G'} = \frac{2}{3}z_{\G} + \frac{1}{3}\log 6. $$ 
We multiply this equation by $\displaystyle \frac{3}{2}|VG|$ to obtain
$$ \zfd_{G'} = \zfd_G + \frac{|VG|}{2}\log 6.$$

Using the same notation as in the proof of Theorem~\ref{thm:G3}, by
definition of $\G'$, $F_{3}' = |VG| + F_3$ and $F_n'=F_n$ for all
$n\neq 3$.
Let $\Delta = \big(\volbp(G')+\volbp(G'^*)\big) - \big(\volbp(G)+\volbp(G^*)\big)$.

\begin{align*}
  \volbp(G')+\volbp(G'^*) = &  \sum\nolimits_{\underset{|f|\neq 3}{f \in FG'}}\vol(B_{|f|})  +  \sum\nolimits_{\underset{|f|=3}{f \in FG'}}\vol(B_3) & + &
       \sum\nolimits_{v \in VG'}\vol(B_4) \\
  \volbp(G)+\volbp(G^*) =  &  \sum\nolimits_{f \in FG}\vol(B_{|f|})  +  \sum\nolimits_{v \in VG}\vol(B_3) & \\
       \hline \\ 
\Delta   = &  \quad \big( (|VG| + F_3) - F_3 - |VG| \big)\vol(B_3) & + & |VG'|\vol(B_4) \hspace*{0.75cm} \\
         = &  \quad \frac{3}{2}|VG|\voct. 
\end{align*}

Thus, since we assumed $\vg \leq 2\pi\,z_{\G}$,
\begin{eqnarray*}
  \volbp(G')+\volbp(G'^*) & = & \volbp(G)+\volbp(G^*) + \frac{3}{2}|VG|\voct \\ 
  & \leq & 2\pi \zfd_G + \frac{3}{2}|VG|\voct \\ 
  & = & 2\pi \left(\zfd_G + |VG|\frac{3\voct}{4\pi} \right)\\
  & < & 2\pi \left(\zfd_G + \frac{|VG|}{2}\log 6 \right)\\ 
  & = & 2\pi \zfd_{G'}\\
  \nu^{\lozenge}(\G') & < & 2\pi\,z_{\G'}.
\end{eqnarray*}
The strict inequality follows because $3\voct < 2\pi\log 6$.
\end{proof}

\begin{corollary}\label{cor:medial}
  Let $\G_n$ denote the sequence of $3$-regular planar lattice graphs obtained from the regular hexagonal lattice graph $\G_0=\G_{\hexagon}$
  by truncation as in Theorem~\ref{thm:G3}.  Let $\G'_n$ be the $4$-regular medial graph of $\G_n$ as in Theorem~\ref{thm:medial}.  Then
  $\displaystyle \nu^{\lozenge}(\G'_n) < 2\pi\,z_{\G'_n} < \vbar({\G'_n})$ for all $n\geq 0$.
\end{corollary}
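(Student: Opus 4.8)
The plan is to treat the two inequalities separately: the lower bound will come essentially for free from Theorem~\ref{thm:medial}, while the upper bound, which Theorem~\ref{thm:medial} does not address, will require an independent argument that is the real content here.

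First I would run an induction on $n$ to record that each $\G_n$ is a $3$-regular planar lattice graph satisfying $\nu^{\lozenge}(\G_n) \le 2\pi z_{\G_n} \le \vbar(\G_n)$. The base case is $\G_0 = \G_{\hexagon}$, for which $\nu^{\lozenge}(\G_{\hexagon}) = 2\pi z_{\G_{\hexagon}} = 5\vtet < \vbar(\G_{\hexagon})$ as recorded after Proposition~\ref{prop:eq1} (and in Table~\ref{table1}). For the inductive step, $\G_{n+1}$ is the truncation of the $3$-regular graph $\G_n$ and hence is again $3$-regular; applying Theorem~\ref{thm:G3} to $\G_n$ upgrades the inductive hypothesis to the strict chain $\nu^{\lozenge}(\G_{n+1}) < 2\pi z_{\G_{n+1}} < \vbar(\G_{n+1})$, which in particular supplies the non-strict hypothesis needed to iterate. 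With this in hand the lower bound is immediate: each $\G_n$ is $3$-regular with $\nu^{\lozenge}(\G_n) \le 2\pi z_{\G_n}$, so Theorem~\ref{thm:medial} applied to $\G_n$ gives $\nu^{\lozenge}(\G'_n) < 2\pi z_{\G'_n}$ for every $n \ge 0$.

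The upper bound is where I expect the work to lie. The first useful observation is that $\vbar(\G'_n)$ does not depend on $n$: since $\G'_n$ is the medial graph of the $3$-regular graph $\G_n$, it is $4$-regular with $|VG'_n| = |EG_n|$, so $|EG'_n| = 2|VG'_n| = 2|EG_n|$ and therefore $\vbar(\G'_n) = |EG'_n|\voct/|VG'_n| = 2\voct$. Thus it suffices to establish the single family of inequalities $2\pi z_{\G'_n} < 2\voct$. Using the entropy relation $z_{\G'_n} = \tfrac23 z_{\G_n} + \tfrac13\log 6$ from the proof of Theorem~\ref{thm:medial}, this is equivalent to the uniform bound $z_{\G_n} < \tfrac{3\voct}{2\pi} - \tfrac12\log 6$ on the truncation family. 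To control $z_{\G_n}$ uniformly, I would solve the truncation recursion $z_{\G_{n+1}} = \tfrac13 z_{\G_n} + \tfrac16\log 15$ in closed form, giving $z_{\G_n} = \tfrac14\log 15 + \big(z_{\G_{\hexagon}} - \tfrac14\log 15\big)3^{-n}$. Because $z_{\G_{\hexagon}} = 5\vtet/2\pi > \tfrac14\log 15$, the sequence $z_{\G_n}$ decreases monotonically to its fixed point, so it attains its maximum at $n = 0$. Hence the uniform bound reduces to the single inequality at the hexagonal lattice, $z_{\G_{\hexagon}} < \tfrac{3\voct}{2\pi} - \tfrac12\log 6$, which after substituting $z_{\G_{\hexagon}} = 5\vtet/2\pi$ and clearing $2\pi$ becomes $5\vtet + \pi\log 6 < 3\voct$. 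This last inequality holds numerically (roughly $10.70 < 10.99$), completing the upper bound for all $n$.

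The main obstacle is thus not the lower bound but the upper bound, and within it the two structural facts that make the family tractable: that $\vbar(\G'_n)$ collapses to the constant $2\voct$ by $4$-regularity of the medial graph, and that the entropies $z_{\G_n}$ of the truncation family never exceed their initial value $z_{\G_{\hexagon}}$. Together these reduce an infinite family of inequalities to the single numerical comparison $5\vtet + \pi\log 6 < 3\voct$; the rest is bookkeeping with the two entropy recursions borrowed from \cite{TeuflWagner}.
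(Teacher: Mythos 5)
Your proof is correct and follows essentially the same route as the paper: the lower bound comes from Theorem~\ref{thm:medial} after inductively propagating the hypothesis along the truncation sequence via Theorem~\ref{thm:G3}, and the upper bound uses the same two entropy recursions from \cite{TeuflWagner}, reducing (after your observation that $\vbar(\G'_n)=2\voct$ and the monotone decrease of $z_{\G_n}$ to its fixed point $\tfrac14\log 15$) to the single numerical inequality $5\vtet+\pi\log 6<3\voct$, which is exactly the paper's final inequality $\bigl(\tfrac{10\vtet}{2\pi}-\log\sqrt{15}\bigr)<3^n\bigl(\tfrac{6\voct}{2\pi}-\log 6\sqrt{15}\bigr)$ evaluated at its worst case $n=0$. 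The only cosmetic difference is that the paper carries the explicit telescoped sum $\zfd_{G'_n}=\tfrac{10\vtet}{2\pi}+\tfrac{3^n-1}{2}\log 15+3^n\log 6$ in per-fundamental-domain form, whereas you solve the recursion per vertex and argue via the fixed point.
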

\begin{proof}
The lower bound follows by Theorem~\ref{thm:medial}.  For the upper bound, 
$$|VG_n|=2\cdot 3^n,\quad |VG'_n|=|EG_n|=3^{n+1},\quad |EG'_n|=2\cdot 3^{n+1}.$$
Applying the formulas in the proofs of Theorems~\ref{thm:G3} and \ref{thm:medial},
\begin{eqnarray*}
  \zfd_{G'_n} &=& \zfd_{G_{\hexagon}} + \left(\sum\nolimits_{i=0}^{n-1}|VG_i|/2\right)\log 15 + \frac{|VG_n|}{2}\log 6 \\
  &=& \frac{10\vtet}{2\pi} + \left(\sum\nolimits_{i=0}^{n-1}3^i\right)\log 15 + 3^n\log 6 \\ 
  &=& \frac{10\vtet}{2\pi} + \frac{3^n-1}{2}\log 15 + 3^n\log 6 \\
  &<& 2\cdot3^{n+1}\voct/2\pi \\
  &=& |EG'_n|\voct/2\pi\\
  2\pi\,z_{\G'_n} &<& \vbar({\G'_n}).
\end{eqnarray*}
The strict inequality follows because
$\displaystyle \left(\frac{10\vtet}{2\pi}-\log \sqrt{15}\right) < 3^n\left(\frac{6\voct}{2\pi}-\log 6\sqrt{15}\right)$ for all $n\geq 0$.
\end{proof}

\section{Right-angled volume of a planar lattice graph}\label{sec:rak}

In this section, for dual simple $3$--connected planar lattice graphs
$\G$ and $\G^*$, we define another geometric invariant, which we call
the right-angled volume $\volp(\G)$.  We prove that
Conjecture~\ref{conj:vol-entropy} implies that $\volp(\G) \leq
2\pi\,z_{\G}$.  We then prove the lower bound in
Conjecture~\ref{conj:vol-entropy} for the regular planar lattice
graphs using the isoradial dimer model.

An \emph{orthogonal circle pattern} is a pair of circle packings whose
contact graphs are planar dual graphs, such that dual edges are
perpendicular and no other edges intersect; i.e., the tangent points
of the dual circle packings coincide, and the two tangent lines at
each point are perpendicular. The Koebe-Andreev-Thurston Circle
Packing Theorem has been generalized to orthogonal circle patterns
(see \cite{Felsner-Rote} and references therein): Every $3$--connected
plane graph admits an orthogonal circle pattern representation, which
is unique up to M\"obius transformations.

An \emph{orthogonally dual embedding} of $\G$ and $\G^*$ is a planar
embedding such that the inscribed circles in their faces form an
orthogonal circle pattern.  We will say that $\G$ and $\G^*$ are
\emph{orthogonally dual lattice graphs}, or that $G$ and $G^*$ are
\emph{orthogonally dual graphs} on the torus, if they admit an
orthogonally dual embedding.  Equivalently, the associated Temperleyan
graph $\gb$ is a quadrangulation embedded such that its faces are
right kites, which are kites with at least two right angles, given by
radii of the orthogonal circle pattern.  Many well-known planar
lattice graphs satisfy orthogonal duality; see
Figure~\ref{fig:dual-list}.

When lifted to the universal cover, considered as the plane at
infinity for $\H^3$, this orthogonal circle pattern defines a
right-angled biperiodic ideal hyperbolic polyhedron $\P$ in $\H^3$
whose vertices are the white vertices of $\Gb$.  Let
$P(G)=\P/\Lambda$, which is a hyperbolic polyhedron with finite
volume.  See Figure~\ref{fig:pyr-kite}.

\begin{definition}\label{def:rak}
If $G$ and $G^*$ are orthogonally dual graphs, define the \emph{right-angled volume} of $G$ and of $\G$ as
$$ \volp(G) = 2 \vol(P(G)) \quad \text{and} \quad \volp(\G)=\frac{\volp(G)}{|VG|}.$$
Note that $\volp(G)=\volp(G^*)$.
\end{definition}

\begin{center}
  \begin{figure}
    \begin{tabular}{ccc}
      \hspace*{-0.5in}\includegraphics[width=2in]{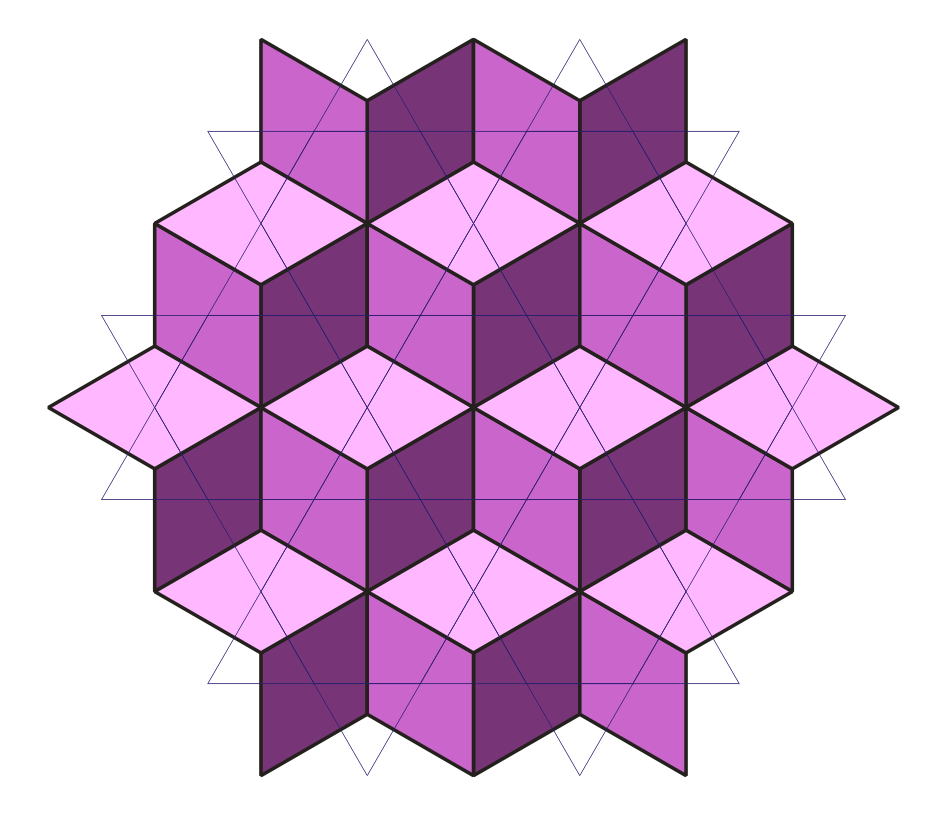} & \includegraphics[width=2in]{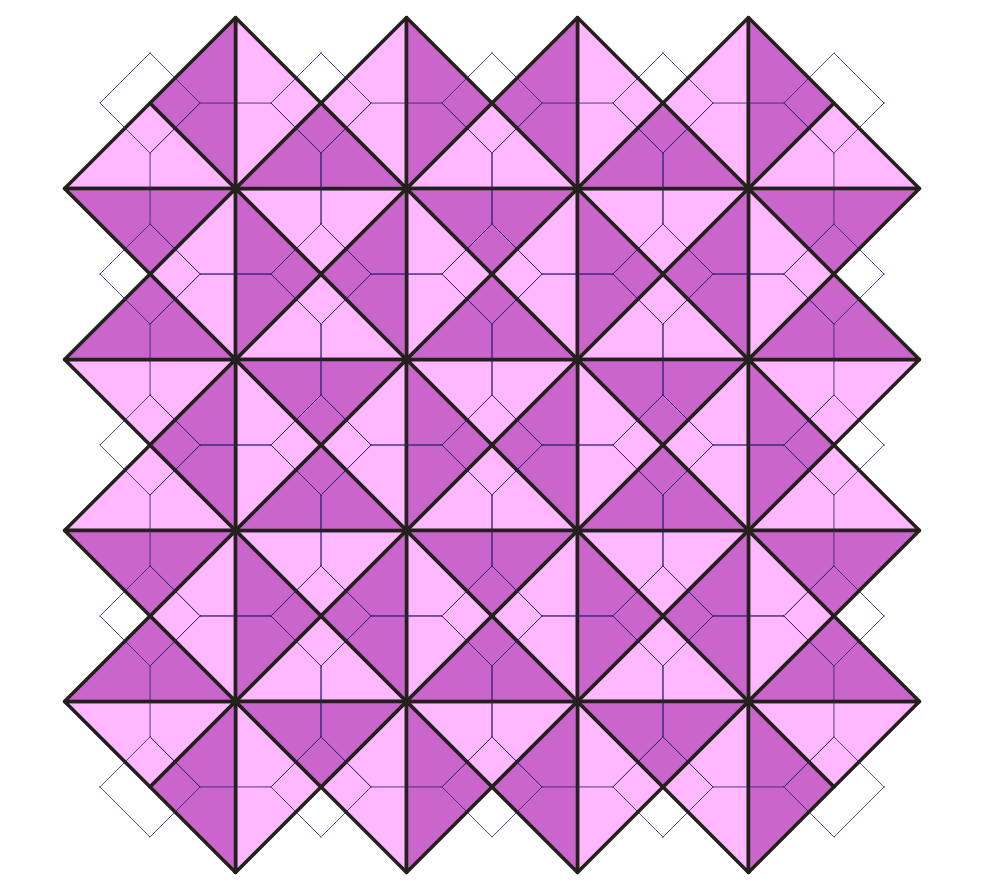} & \includegraphics[width=2in]{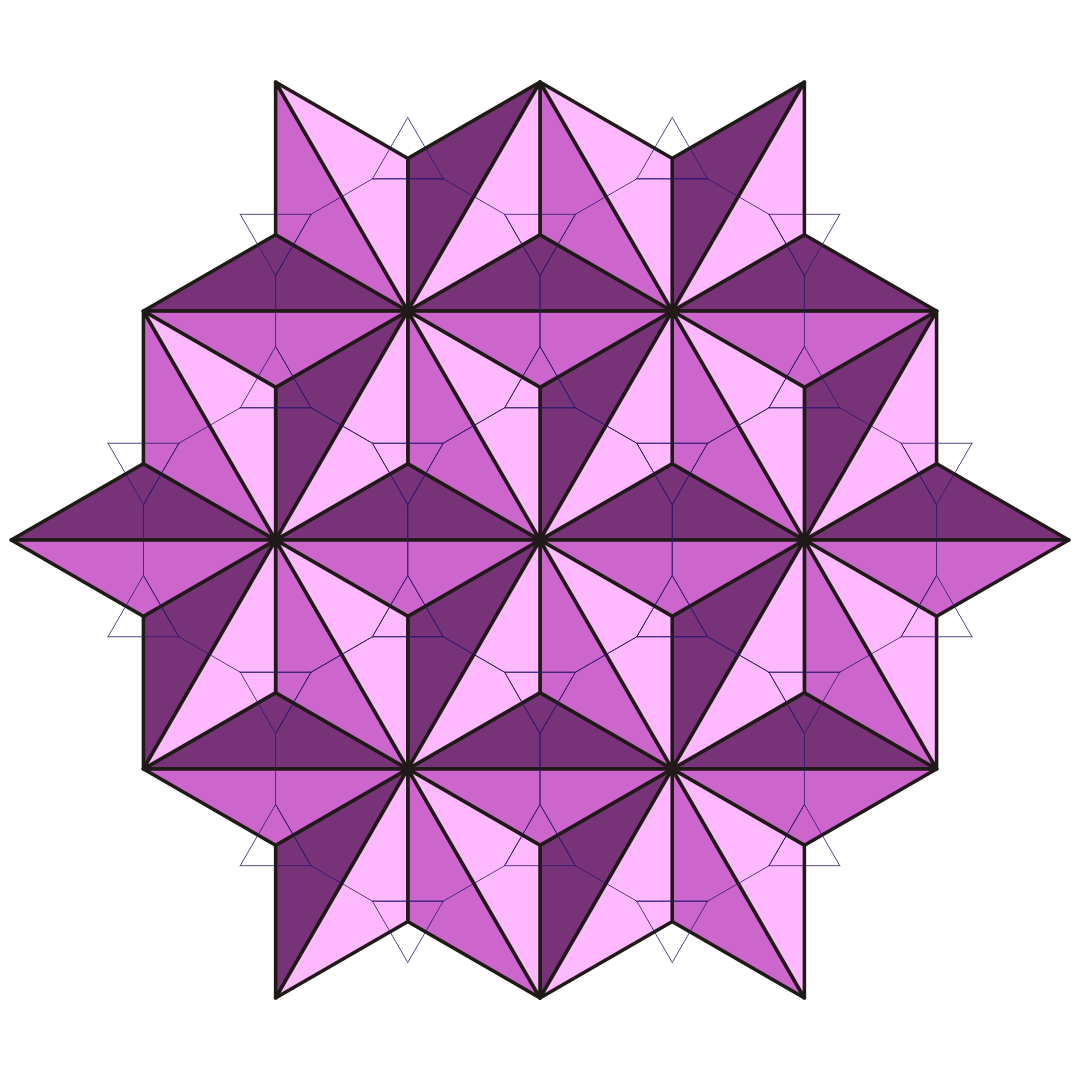} \\
       \hspace*{-0.5in}{\small Lattice graph \#4 } & {\small Lattice graph \#5 } & {\small Lattice graph \#7 } \\ \\
      \hspace*{-0.5in}\includegraphics[width=2.1in]{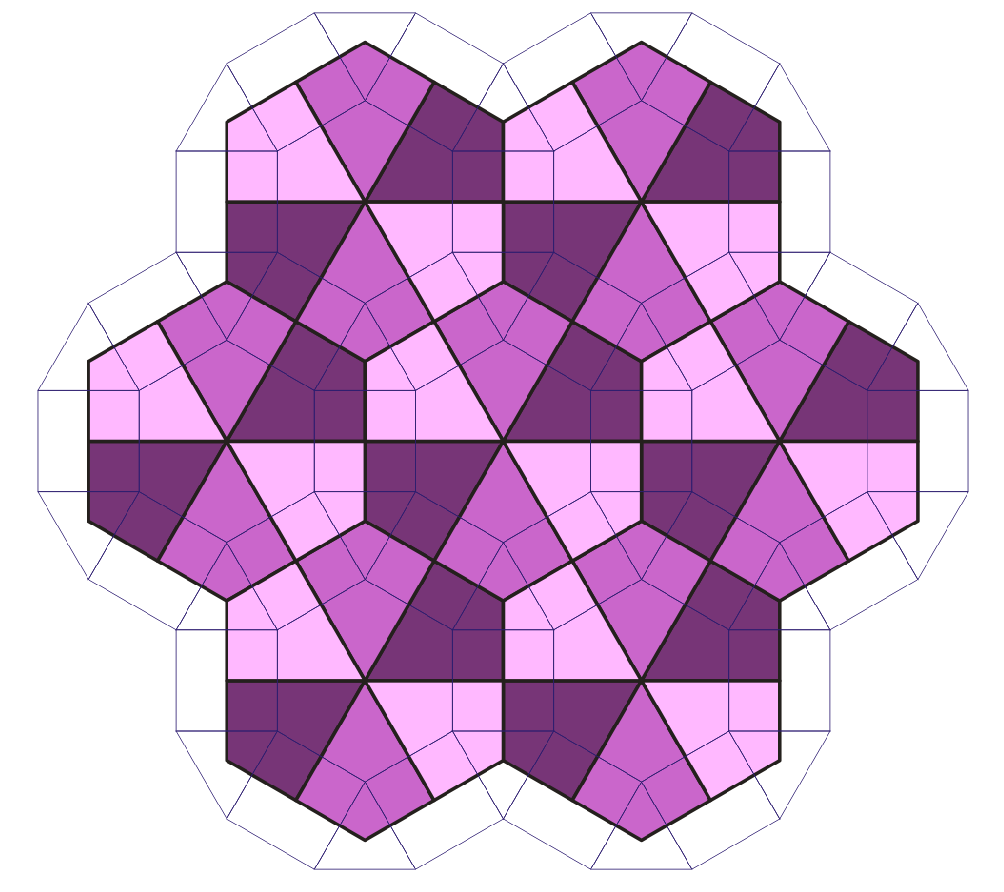} & \includegraphics[width=2.2in]{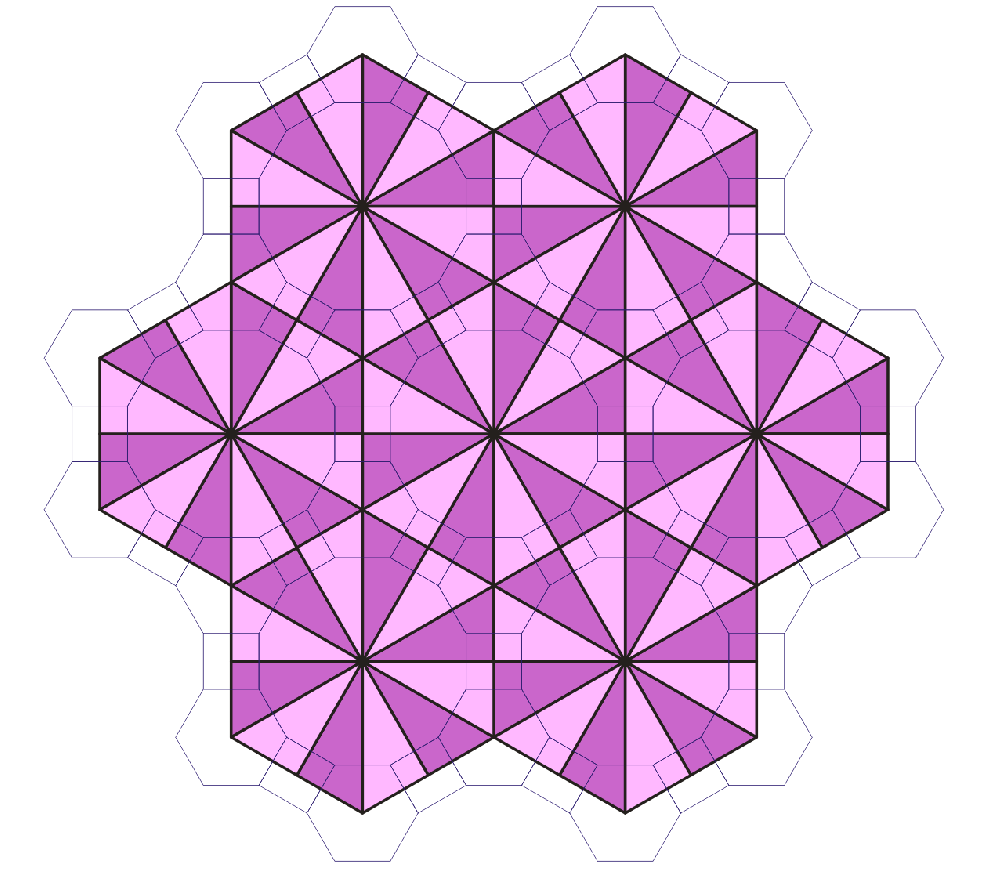} & \includegraphics[width=2in]{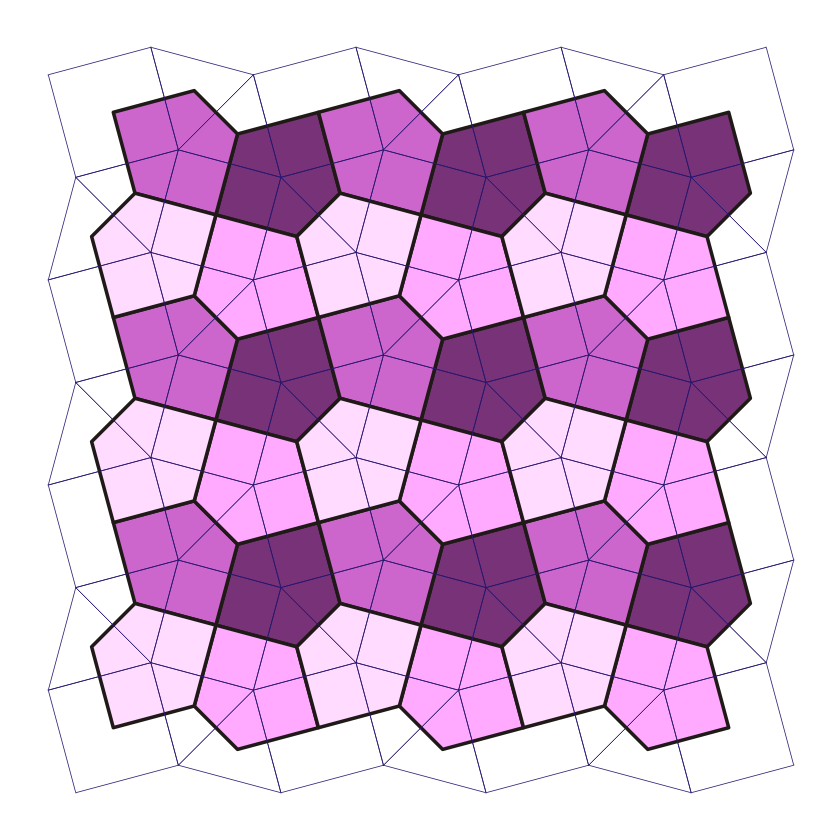} \\
       \hspace*{-0.5in}{\small Lattice graph \#8 } & {\small Lattice graph \#9} & {\small Lattice graphs \#10 and \#14} \\
\end{tabular}
    \caption{Lattice graphs $\G$ and $\G^*$ with orthogonally dual embeddings shown, so faces of $\Gb$ are right kites in each case.
      Labels from Table~\ref{table1}. Figures from~\cite{Cairo-wiki}.}
\label{fig:dual-list}
\end{figure}

\begin{figure}
  \includegraphics[height=1.4in]{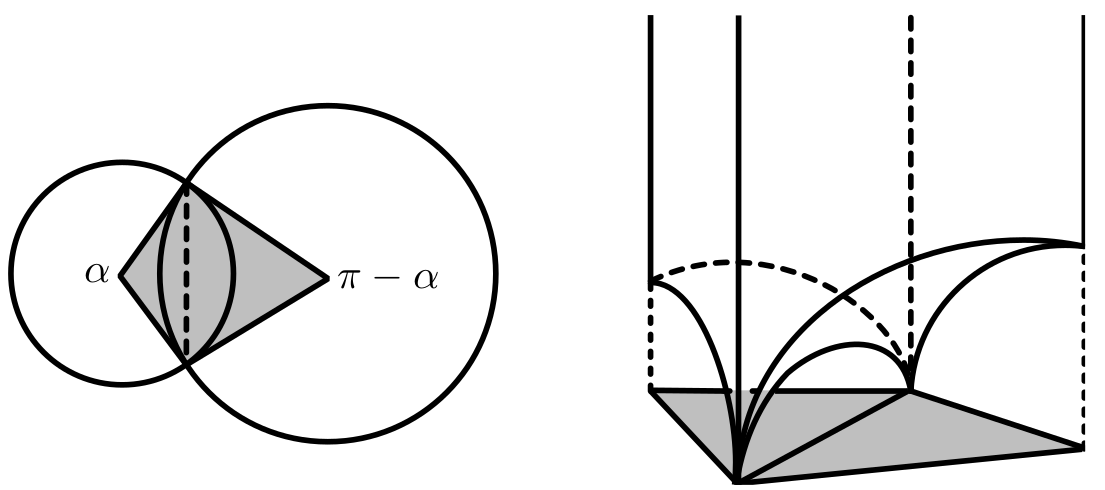} \qquad
  \scalebox{0.8}{\input{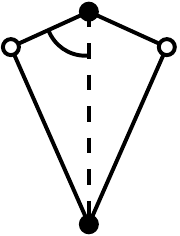_t}} \\
  \hspace*{0.3in} (a) \hspace*{1.5in} (b) \hspace*{1.1in} (c)
\caption{(a) A right kite formed by radii of intersecting orthogonal
  circles.  (b) An ideal hyperbolic polyhedron bounded by vertical
  planes and intersecting hemispheres above the kite, which consists of
  two $3/4$-ideal tetrahedra. (c) As a face of $\gb$, the kite has half-angle $\theta_e$ at edge $e$ of $\gb$.}
\label{fig:pyr-kite}
\end{figure}
\end{center}

\begin{theorem}\label{thm:rak}
  If $G$ and $G^*$ are dual simple $3$--connected graphs on the torus with faces that are topologically disks, then
\begin{enumerate}
\item $G$ and $G^*$ admit an orthogonally dual embedding on the torus, which is unique up to M\"obius transformations,
\item $G$ and $G^*$ are Tait graphs of an alternating link $L$ in $T^2\times I$, such that $(T^2\times I)-L$ is hyperbolic,
\item $\volp(G) \ \leq \ \vol(G) \ \leq \ \volbp(G)+\volbp(G^*)$.
\end{enumerate}
\end{theorem}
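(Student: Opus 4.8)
The plan is to handle the three parts separately, since (1) and (2) are essentially invocations of known results, while the new geometric content is the lower bound in (3). For part (1), I would apply the toroidal form of the generalized Koebe--Andreev--Thurston theorem recalled above: as $G$ and $G^*$ are simple and $3$--connected, their common lift is a biperiodic $3$--connected planar graph in the universal cover, so the orthogonal circle pattern theorem (see \cite{Felsner-Rote}) produces a $\Lambda$--periodic orthogonal circle pattern in the plane, unique up to M\"obius transformations, which descends to the required orthogonally dual embedding on $T^2$. For part (2), the hypotheses---$3$--connected, hence $2$--connected, with disk faces---are precisely those of \cite[Theorem~4.2]{HowiePurcell}, giving that $(T^2\times I)-L$ is hyperbolic of finite volume.

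The upper bound in (3) is already Proposition~\ref{prop:vol}, so only $\volp(G)\leq\vol(G)$ remains. The plan is to extract from the orthogonal circle pattern a volume-minimizing hyperbolic model for the complement. Viewing the lifted pattern on $\partial\H^3$ and intersecting the half-spaces bounded by the geodesic hemispheres over its circles produces the right-angled ideal polyhedron $\P$---right-angled precisely because the circles meet orthogonally---with $P(G)=\P/\Lambda$; decomposing it over the right kites of $\gb$ into the two $3/4$-ideal tetrahedra of Figure~\ref{fig:pyr-kite}(b) expresses $\vol(P(G))$, and hence $\volp(G)=2\vol(P(G))$, through the kite half-angles $\theta_e$. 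First I would match this with the standard checkerboard (Menasco-type) polyhedral decomposition of $(T^2\times I)-L$: cutting along the two checkerboard surfaces associated to $G$ and $G^*$ presents the complement as two ideal polyhedra of the same combinatorial type as $\P$, exchanged by reflection across the projection torus, so the right-angled realization of this pair has total volume $2\vol(P(G))=\volp(G)$.

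To turn this into the inequality, I would triangulate each of the two checkerboard polyhedra into ideal tetrahedra and read off the right-angled realization as an angle structure on the resulting ideal triangulation of $(T^2\times I)-L$: the dihedral parameters are the half-angles $\theta_e$, which are positive and meet the edge and tetrahedron linear conditions precisely because the circle pattern is orthogonal. Since the total volume of this angle structure equals $2\vol(P(G))=\volp(G)$, the Casson--Rivin--Gu\'eritaud theorem---that the complete hyperbolic structure uniquely maximizes the volume functional over all angle structures---yields $\volp(G)\leq\vol(G)$. Should the triangulation bookkeeping prove delicate, an alternative is to recognize the two right-angled polyhedra as the guts of the checkerboard surfaces and apply the Agol--Storm--Thurston estimate $\vol(G)\geq\vol(\mathrm{guts})=\volp(G)$; the equality case for $\G_{\triangle},\G_{\square},\G_{\hexagon}$ noted above, where the checkerboard surfaces become totally geodesic, is a reassuring consistency check.

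The hard part will be the compatibility step in the middle: verifying that the orthogonally dual embedding makes the two checkerboard surfaces isotopic to the totally geodesic walls of $\P$, so that the right-angled data is a genuine admissible angle structure---with all $\theta_e>0$ and correct edge-angle sums---rather than a merely formal assignment of shapes. Once this link between the orthogonal circle pattern and the polyhedral decomposition of $(T^2\times I)-L$ is secured, both the identification $\volp(G)=2\vol(P(G))$ and the volume comparison follow, and combining with Proposition~\ref{prop:vol} completes part (3).
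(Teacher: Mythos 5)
Your overall strategy is sound, but it takes a genuinely different route from the paper: the paper's proof of Theorem~\ref{thm:rak} is a reduction to \cite[Theorem~7.5]{ckp:gbal}. It verifies that ``simple, $3$--connected, disk faces'' implies the diagrammatic hypotheses of that theorem (reduced, weakly prime, cellularly embedded, no bigons, no cycle of tangles --- with $3$--connectedness ruling out the two-edge cut-sets), and then translates the conclusion back to graph language by identifying $P(G)$ with the right-angled polyhedron $P(L)$ of the circle pattern on the checkerboard faces of $L$, and $\volbp(G)+\volbp(G^*)$ with the bipyramid volume of $L$. What you propose is essentially a reconstruction of the proof of that black-boxed reference: the checkerboard decomposition into two ideal polyhedra, the right-angled realization from the orthogonal circle pattern, and the Casson--Rivin/Gu\'eritaud principle that the volume of an angle structure is a lower bound for $\vol(M)$. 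That is indeed how the cited result is proved, and you correctly isolate the delicate step (verifying that the kite half-angles $\theta_e$ give an admissible angle structure on a triangulation of the checkerboard polyhedra). Your approach buys self-containedness; the paper's buys brevity and offloads exactly that delicate step to \cite{ckp:gbal}.

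Two caveats. First, in part (1) the existence and uniqueness of a $\Lambda$--periodic orthogonal circle pattern is \emph{not} obtained by lifting to the universal cover and applying the finite planar theorem of \cite{Felsner-Rote}; the planar statement gives no periodicity, and the paper instead invokes the variational result of Bobenko and Springborn \cite{bs2004} for circle patterns on the torus. You should cite that (or prove periodicity separately). Second, your fallback via guts and Agol--Storm--Thurston does not directly yield $\vol(G)\geq\volp(G)$: that inequality bounds volume below by a constant times the Gromov norm of the doubled guts, not by the volume of a chosen right-angled realization of the checkerboard polyhedra, so the angle-structure argument is the one to keep. With those repairs, your outline matches the substance of the result the paper cites.
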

  
\begin{proof}
This theorem restates results from hyperbolic knot theory in
\cite[Theorem~7.5]{ckp:gbal}, and we explain the correspondence below.
The proof relies on a result about the existence of orthogonal circle
patterns on the torus, due to Bobenko and Springborn \cite{bs2004}.
  
The conditions on $L$ in \cite[Theorem~7.5]{ckp:gbal} correspond to the following conditions on $G$ and $G^*$: 
\begin{enumerate}
\item[(a)] $G$ and $G^*$ are $2$-connected; in particular, neither $G$ nor $G^*$ has loops,
\item[(b)] the faces of $G$ and of $G^*$ are topologically disks on the torus, and
\item[(c)] if $e_1, e_2\in EG$ form a cut-set for $G$ then a component
  of $EG\setminus\{e_1, e_2\}$ is a path graph, and similarly for $G^*$.
\end{enumerate}
Condition (a) means that $L$ is reduced and weakly prime \cite[Definition~7.1]{ckp:gbal};
(b) means that $L$ has a cellular emebedding on the torus; and
(c) means that $L$ has no cycle of tangles \cite[Definition~6.2]{ckp:gbal}.
Finally, $L$ has no bigons because degree-$2$ vertices of $G$ correspond to bigons of $L$.

Our conditions on $G$ and $G^*$ imply the conditions (a),\, (b),\,
(c).  In particular, since edge-connectivity is at least
vertex-connectivity, $3$--connectedness precludes a two-edge cut as in
(c).  So the conditions in \cite[Theorem~7.5]{ckp:gbal} are satisfied
by our conditions on $G$ and $G^*$, which proves $(1)$ and $(2)$.

Since the projection of $L$ is the medial graph of $G$,
$$ \volbp(L) = \sum_{f \in \{\text{faces of}
  \ L\}}\vol(B_{|f|})=\volbp(G)+\volbp(G^*). $$ It remains to show
that $\volp(L)=\volp(G)$.  When the faces of $\gb$ are right kites,
the white vertices of $\gb$ are the intersections of the orthogonal
circle pattern, which correspond to crossings of $L$.  The crossings
of $L$ are the vertices of the projection graph, and we can
checkerboard color its faces.  Thus, the orthogonal circle pattern
given by inscribed circles of $G$ and of $G^*$ is exactly the
orthogonal circle pattern given by the circumscribed circles of the
shaded and unshaded faces of $L$.  Thus, the right-angled polyhedron
$P(G)$ is exactly $P(L)$ in the theorem, whose $1$-skeleton is the
projection graph of $L$.  This proves (3).
\end{proof}

For example, the $3^3$-$4^2$ lattice graph $\G$ is sometimes drawn as shown at left, but here is how to draw its orthogonally dual embedding:

\begin{center}
\begin{tabular}{ccc}
  \includegraphics[height=0.8in]{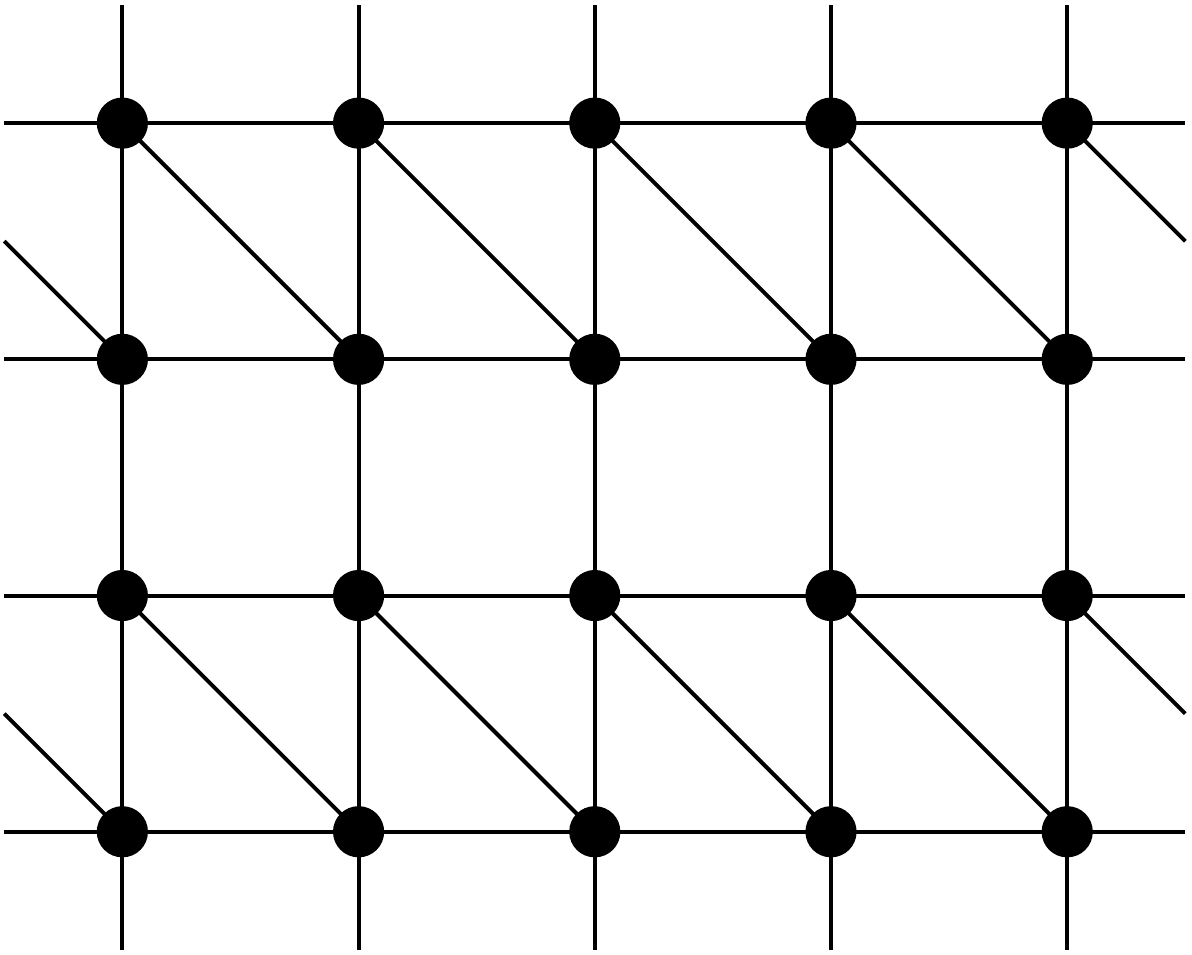} &
\begin{tabular}{c}
  \ $\longrightarrow$\\ \\ \\ \\ \\
 \end{tabular}
    & \includegraphics[height=0.8in]{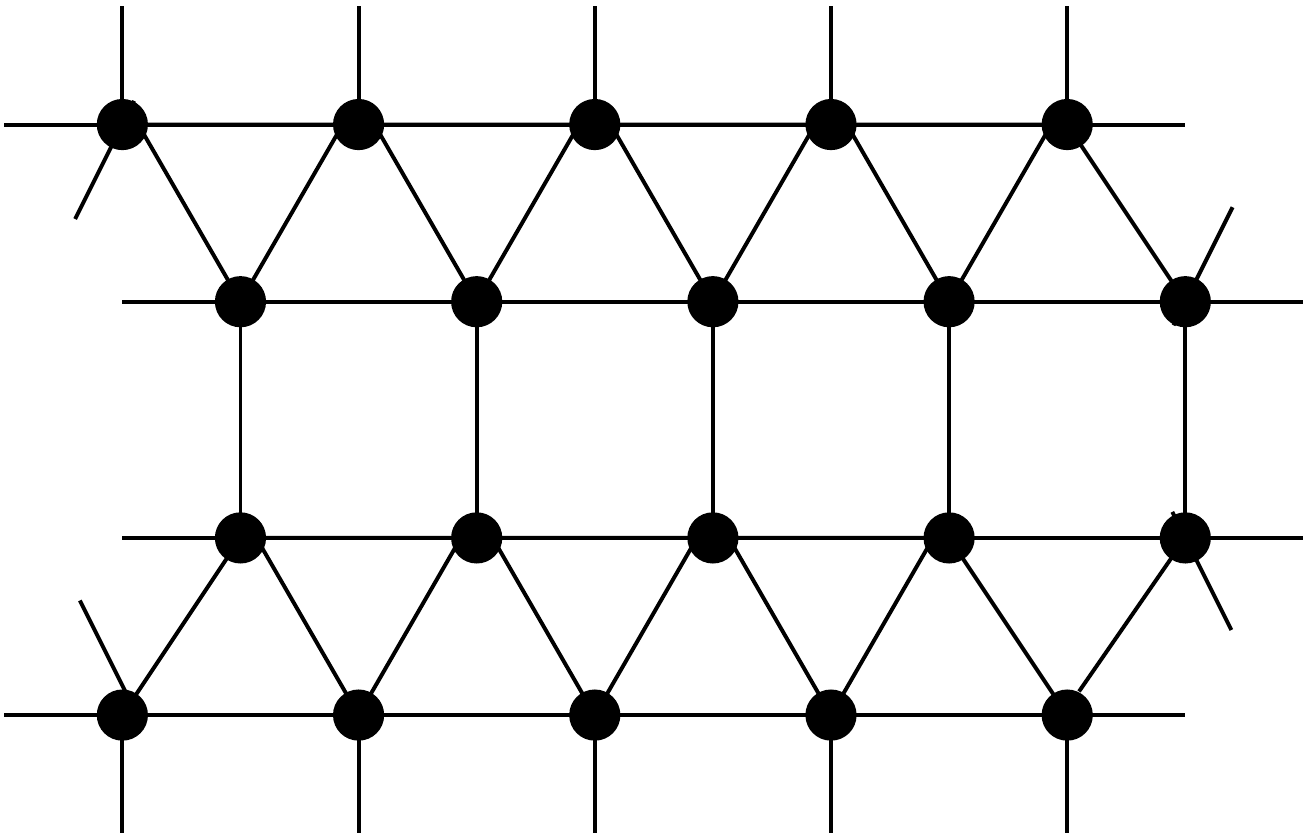}
\end{tabular}
\end{center}
\vspace*{-0.25in}

By Theorem~\ref{thm:rak}, for planar lattice graphs $\G$ that satisfy orthogonal duality,
$$ \volp(\G) \ \leq \ \vol(\G) \ \leq \ \vg.$$
Moreover, as we did for $\vg$, we can compute $\volp(\G)$ using only
the local geometry of its orthogonally dual embedding.  If $G$ and
$G^*$ are orthogonally dual, then the right kite angles of $\gb$ are
uniquely determined.  For $e\in E\gb$, let $2\theta_e$ be the angle
swept out counter-clockwise around the black vertex of $e$ to the
adjacent edge, which is one of the vertex angles of a right kite.
Then $\theta_e$ will be called the half-angle at $e$, as shown in
Figure~\ref{fig:pyr-kite}(c).  The chord shown in
Figure~\ref{fig:pyr-kite}(a) is the diagonal of a right kite joining
the white vertices of $\gb$.  If $v\in VG$ and $v'\in VG^*$ are the
black vertices of the kite, then it has sides $e, e'\in E\gb$, such
that $2\theta_e+2\theta_{e'}=\pi$.  Let $L(\theta)$ be the Lobachevsky
function, as in Definition~\ref{def:bipyramid}.  By Milnor's volume
formula, the volume of the two $3/4$-ideal tetrahedra shown in
Figure~\ref{fig:pyr-kite}(b) is $L(\theta_e)+L(\theta_{e'})$.
Therefore,
\begin{equation}\label{eq:vol-inequality}
   \volp(G)= \sum_{e\in E\gb} 2L(\theta_e).
\end{equation}


For a regular planar lattice graph $\G$, the lower bound in
Conjecture~\ref{conj:vol-entropy} was proved with equality using
number theory in \cite[Theorems~12, 13]{ckl:mm_voldet}.  Below we give
another proof using the toroidal dimer model and the fact that $\G$
satisfies both orthogonal duality and a geometric condition called an
\emph{isoradial embedding.}  Namely, $\G$ is embedded isoradially in
the plane if each face is inscribed in a circle of radius $1$ whose
center is in the interior of that face.  Such an isoradial embedding
is equivalent to a rhombic embedding of an associated quadrangulation
by joining each vertex of $\G$ with the circumcenter of every face
incident to that vertex.  Equivalently, $\gb$ is embedded such that
its black vertices are vertices of the rhombi, and its white vertices
are centers of the rhombi.

\begin{theorem}\label{thm:ortho-iso}
  For the regular planar lattice graphs, $\G_{\triangle},\,\G_{\square},\,\G_{\hexagon}$,
  $$ \volp(G) = \vol(G) = \volbp(G)+\volbp(G^*)= 2\pi\,\m(p(z,w))=2\pi\,\zfd_G.$$
  Thus, the lower bound in Conjecture~\ref{conj:vol-entropy} holds with equality.
\end{theorem}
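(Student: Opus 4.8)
The plan is to prove the chain by a squeeze. By Theorem~\ref{thm:rak}(3), whenever $G$ and $G^*$ are orthogonally dual we have $\volp(G) \le \vol(G) \le \volbp(G)+\volbp(G^*)$, so if I can show the two outer terms coincide for the regular lattices, then all three are forced equal. First I would pass to a fundamental domain consisting of a large enough $m\times m$ block, so that the toroidal quotients $G$ and $G^*$ are simple and $3$--connected (the infinite regular lattices are highly connected, so this holds once $m$ is large), which is what is needed to invoke Theorem~\ref{thm:rak}. This changes none of the quantities in the statement: $\volp(G)$, $\vol(G)$, $\volbp(G)+\volbp(G^*)$ and $\zfd_G$ all scale by $m^2$ under the corresponding $m^2$--fold cover, while $z_{\G}$, being a density, is unchanged.

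The first main step is the equality $\volp(G)=\volbp(G)+\volbp(G^*)$, i.e.\ the equality case of the bound in Theorem~\ref{thm:rak}(3). Using the local formula~(\ref{eq:vol-inequality}), $\volp(G)=\sum_{e\in E\gb}2L(\theta_e)$, I would group the edges of $\gb$ by their black endpoint. A black vertex is the center of a face of degree $n$ (equivalently a vertex of $G$ or of $G^*$ of degree $n$), and it has exactly $n$ incident edges of $\gb$. For the regular lattices the orthogonal circle pattern is symmetric, so these $n$ edges are equally spaced about the black vertex; the kite angle at the center is then $2\pi/n$, forcing $\theta_e=\pi/n$ at each of them. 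Hence each degree--$n$ black vertex contributes $n\cdot 2L(\pi/n)=\vol(B_n)$, and summing over the black vertices $VG\cup VG^*$ gives $\volp(G)=\sum_{v\in VG}\vol(B_{|v|})+\sum_{v^*\in VG^*}\vol(B_{|v^*|})=\volbp(G)+\volbp(G^*)$, using that vertices of degree $n$ of one graph correspond to faces of degree $n$ of the other. Combined with the squeeze, this proves $\volp(G)=\vol(G)=\volbp(G)+\volbp(G^*)$, and for the three regular lattices these common values are $10\vtet$, $2\voct$ and $10\vtet$ respectively.

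It remains to prove the analytic equality $\volbp(G)+\volbp(G^*)=2\pi\,\zfd_G$; this is where isoradiality, rather than merely orthogonal duality, is essential. The plan is to compute $\zfd_G=\m(p(z,w))$ (the last equality being the Kenyon--Okounkov--Sheffield theorem \cite{KOS} already invoked in the paper) directly from the isoradial dimer model on $\gb$. For an isoradial graph the critical dimer weights are determined by the rhombic embedding, and Kenyon's local free--energy formula for the critical dimer model expresses the free energy per fundamental domain as a sum of Lobachevsky functions of the rhombus angles. The goal is to identify this sum with $\tfrac{1}{2\pi}\sum_{e\in E\gb}2L(\theta_e)=\tfrac{1}{2\pi}\volp(G)$, giving $2\pi\,\zfd_G=\volp(G)=\volbp(G)+\volbp(G^*)$, which closes the chain.

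I expect this last identification to be the main obstacle, for two reasons. First, Kenyon's formula is naturally attached to the rhombus half-angles of the isoradial embedding, whereas $\volp(G)$ is attached to the kite half-angles $\theta_e=\pi/n$ of the orthogonal circle pattern, so one must match these two angle systems for the regular lattices and verify the Lobachevsky sums agree. Second, the critical isoradial dimer weights correspond, via Temperley's bijection, to a \emph{weighted} spanning-tree measure with conductances $\tan(\text{rhombus half-angle})$, while $z_{\G}$ counts spanning trees uniformly; since each regular lattice is edge--transitive these conductances are all equal to a single constant $c$ (with $c=1$ for $\G_{\square}$ but $c=1/\sqrt3$ and $c=\sqrt3$ for $\G_{\triangle}$ and $\G_{\hexagon}$), so the weighted and uniform entropies differ by the explicit additive constant $\log c$, which must be tracked and shown to be reabsorbed in the normalization. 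Once the symmetric angle values $\theta_e=\pi/n$ are in hand, the geometric squeeze and the equality-case computation of the second paragraph are comparatively routine; the analytic reconciliation of the isoradial free energy with the unweighted entropy $z_{\G}$ is the essential new input.
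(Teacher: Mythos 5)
Your overall architecture is the same as the paper's: the squeeze $\volp(G)\le\vol(G)\le\volbp(G)+\volbp(G^*)$ from Theorem~\ref{thm:rak}(3), collapsed to equalities by showing the two outer terms agree via the symmetric kite angles $\theta_e=\pi/n$ at each degree-$n$ black vertex of $\gb$ (so each such vertex contributes $2nL(\pi/n)=\vol(B_n)$ to the sum in~(\ref{eq:vol-inequality})), followed by an isoradial critical free-energy computation to reach $2\pi\,\zfd_G$. Two genuine differences are worth noting. First, you are more careful than the paper about the hypotheses of Theorem~\ref{thm:rak}: passing to an $m\times m$ fundamental domain so that $G$ and $G^*$ are simple and $3$--connected, and checking all quantities scale by $m^2$, is a point the paper leaves implicit. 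Second, for the analytic step you work on the spanning-tree side, via Temperley's bijection and the critical Laplacian conductances $\tan\theta_e$, whereas the paper works on the dimer side, comparing the unweighted characteristic polynomial $p(z,w)$ with the critically weighted $p_\nu(z,w)$ and invoking de Tili\`ere's local formula~(\ref{eqn:BdT}). The two routes are equivalent in substance, but the dimer route lets the paper make the weight correction completely explicit.

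That explicit cancellation is the one step you flag but do not carry out, and it is the heart of the equality $2\pi\,\zfd_G=\volbp(G)+\volbp(G^*)$. The mechanism is: because all edges of $\gb$ incident to a fixed black vertex $v$ carry the same critical weight $2\sin(\pi/|v|)$, that constant factors out of the corresponding row of the Kasteleyn matrix, giving $\m(p_\nu)=\m(p)+\sum_v\log(2\sin(\pi/|v|))$; on the other hand, the logarithmic term in~(\ref{eqn:BdT}) satisfies
\begin{equation*}
 \sum_{i=1}^{|v|}2\theta_i\log(2\sin\theta_i)=2\pi\log\bigl(2\sin(\pi/|v|)\bigr)
\end{equation*}
precisely because the angles $2\theta_i$ around $v$ sum to $2\pi$ and are all equal. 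Subtracting, the two correction terms cancel and $2\pi\,\m(p(z,w))=\sum_{e\in E\gb}2L(\theta_e)=\volp(G)$. On your $\tan\theta_e$ side the same bookkeeping gives a shift of $\log c$ per vertex in the weighted tree entropy, which cancels against the corresponding term in Kenyon's Laplacian formula for the identical reason; but as written your proof stops at ``must be tracked and shown to be reabsorbed,'' so you should supply this identity (or the Kasteleyn factoring) to close the argument. Your worry about reconciling rhombus half-angles with kite half-angles is resolved by the same observation: for the regular lattice graphs both angle systems are forced to be $\pi/n$ at a degree-$n$ vertex, so the two Lobachevsky sums coincide term by term.
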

\begin{proof}
The toroidal dimer model is a statistical mechanics model of the set
of dimer coverings of $\gb=\Gb/\Lambda$.  The characteristic
polynomial of the dimer model is defined as $p(z,w) = \det \K(z,w)$,
where $\K(z,w)$ is the Kasteleyn matrix.
Let $\gb_n=\Gb/(n\Lambda)$ be the finite balanced bipartite toroidal graph, and
$Z(\gb_n)$ be the number of dimer coverings of $\gb_n$.
By \cite{KOS}, its entropy is given by
\begin{equation}\label{eqn:KOS}
  \lim_{n\to\infty}\frac{1}{n^2}\log Z(\gb_n) = \m(p(z,w)).
\end{equation}
See \cite{ckl:mm_voldet} for details, examples and references.

For an isoradial graph, critical edge weights are defined as
$\nu(e)=2\sin\theta_e$, where edge $e$ is the diagonal of a rhombus and $\theta_e$ is the half-angle at $e$.
Thus, $\nu(e)$ is the length of the other diagonal of the rhombus, which is dual to $e$.
The function $\nu$ is called the critical weight function for the
isoradial dimer model on $\gb$, introduced in \cite{kenyonlap}.
By \cite{BdT}, in the isoradial case with the critical edge weights,
the entropy of the toroidal dimer model can be computed using only the
local geometry of the isoradial embedding:
\begin{equation}\label{eqn:BdT}
  2\pi\,\m(p_\nu(z,w)) = 2\pi\lim_{n\to\infty}\frac{1}{n^2}\log Z(\gb_n,\nu) = \sum_{e\in E\gb}\big(2\theta_e\log(2\sin\theta_e)+2L(\theta_e)\big).
\end{equation}
We emphasize two key differences between equations (\ref{eqn:KOS}) and
(\ref{eqn:BdT}): First, edges in equation~(\ref{eqn:KOS}) do not have
the critical edge weights because we are interested in counting
spanning trees.  Second, the Mahler measure $\m(p(z,w))$ in
equation~(\ref{eqn:KOS}) cannot be expressed using Lobachevsky
functions as in equation~(\ref{eqn:BdT}); see \cite{ckl:mm_voldet} for
exact computations of $\m(p(z,w))$.

We now use that each $\G$ satisfies both orthogonal duality and
isoradiality.  Let $v$ be a black vertex of $\gb$; i.e., $v\in VG\cup
VG^*$.  Let $2\theta_i$ be the angle between adjacent edges $e_i,\,
e_{i+1}$ of $\gb$ incident to $v$.  When $\theta_i=\theta$ for all $i$
at some $v\in VG\cup VG^*$, then all edges incident to $v$ have the
same critical weight $2\sin\theta$, which can be factored out of the
Kasteleyn matrix.  This factor appears as the summand
$\log(2\sin\theta)$ in $\m(p_\nu(z,w))$;
i.e., $\m(p_\nu(z,w))=\m(p(z,w))+\log(2\sin\theta)$.
Moreover, in this case $\theta_i=\pi/|v|$ for all $i$, so
   $$ 2\pi\log(2\sin\theta) = 2\pi\,\log\left( \frac{1}{|v|} \sum_{i=1}^{|v|}2\sin\theta_i\right) = \sum_{i=1}^{|v|}2\theta_i\log(2\sin\theta_i)=\sum_{e\in E\gb}2\theta_e\log(2\sin\theta_e).$$
Since this occurs at every $v\in VG\cup VG^*$ for the regular planar lattice graphs, we have
  \begin{eqnarray*}
    2\pi\,\m(p(z,w)) & = & 2\pi\big(\m(p_\nu(z,w)) - \log(2\sin\theta)\big) = 2\pi\,\m(p_\nu(z,w)) - \sum_{e\in E\gb}2\theta_e\log(2\sin\theta_e) \\
    & = & \sum_{e\in E\gb} 2L(\theta_e) = \volp(G).
  \end{eqnarray*}  
The last equality is by equation~(\ref{eq:vol-inequality}).    
Moreover, since $\vol(B_n)=2n\,L(\pi/n)$,
\begin{eqnarray*}
   \sum_{e\in E\gb} 2L(\theta_e)  &=& \sum_{v\in VG\cup VG^*} \sum_{i=1}^{|v|}2L(\theta_i) 
      = \sum_{v\in VG\cup VG^*} 2|v|\,L\big(\frac{\pi}{|v|}\big) = \sum_{v\in VG\cup VG^*}\vol(B_{|v|}) \\
      &=& \volbp(G)+\volbp(G^*).
    \end{eqnarray*}  
Therefore, by Theorem~\ref{thm:rak},
  $$ \volp(G) = \vol(G) = \volbp(G)+\volbp(G^*)= 2\pi\,\m(p(z,w)).$$
Finally, $\m(p(z,w))=\zfd_G$ essentially by equation~(\ref{eqn:KOS}), as explained in \cite[Proposition~5.3]{ck:det_mp}.
When we divide by $|VG|$, the lower bound in Conjecture~\ref{conj:vol-entropy} holds with equality for $\G$.
\end{proof}

\begin{remark}
In \cite{kenyonlap}, Kenyon interpreted the Lobachevsky functions as
the volume of a hyperbolic polyhedron $\P$ associated to $(\Gb)^*$,
using the circle pattern given by circumcircles of the faces of
$(\Gb)^*$.  Vertices of $(\gb)^*$ are the intersections of a circle
pattern whose radii are the edges of rhombi.  When lifted to the
universal cover, considered as the plane at infinity for $\H^3$, this
circle pattern defines a biperiodic ideal hyperbolic polyhedron $\P$
in $\H^3$ whose vertices are the vertices of $(\Gb)^*$.  Let
$\PP=\P/\Lambda$, which is a hyperbolic polyhedron with finite volume.

If $G$ has an isoradial embedding, and $\PP$ is the hyperbolic polyhedron
associated to $(\gb)^*$, then
$$ \vol(\PP) = \sum_{e\in E\gb} 2L(\theta_e)\ \leq \ \volbp(G)+\volbp(G^*).$$
The first equality was proved in \cite{kenyonlap}.  For the
inequality, we can decompose $\PP$ into tetrahedra, and collect them
around each black vertex of $\gb$ to form a bipyramid.  This is
possible because around each vertex $v$, $\sum\theta_e=\pi$ for all
edges $e$ incident to $v$.  By~\cite{Adams:bipyramids}, the maximal
volume of hyperbolic bipyramids is achieved by the regular bipyramids,
whose volumes sum to $\volbp(G)+\volbp(G^*)$.

When $G$ is isoradial, the circles that determine $\PP$ are all
congruent, which is not required to bound a hyperbolic polyhedron.  On
the other hand, the polyhedron $P(G)$ given by an orthogonal circle
pattern is right-angled, which is another special case.  The geometry
for these two types of planar lattice graphs corresponds as follows:

\ 

\hspace*{-0.5cm}
\begin{tabular}{lcc}
  & $\G$ and $\G^*$ satisfy & $\G$ and $\G^*$ satisfy \\
  & orthogonal duality & isoradiality \\
  \hline 
  \rule{0pt}{3ex} Circle pattern: & orthogonal, inscribed circles & isoradial, circumscribed circles \\
  \rule{0pt}{3ex} Local geometry: & $\gb$-faces are right kites & black $V\gb$ form rhombi \\
  \rule{0pt}{3ex} Hyperbolic polyhedron: & vertices of $P$ are white $V\gb$ & vertices of $\PP$ are $V(\gb)^*$ \\
  \rule{0pt}{3ex} Volume of polyhedron: & $\displaystyle \vol(P)=\sum\nolimits_{e\in E\gb} L(\theta_e)$ & $\displaystyle \vol(\PP)=\sum\nolimits_{e\in E\gb} 2L(\theta_e)$ \\
  \rule{0pt}{3ex} Volume inequality: & $\displaystyle 2\vol(P) \leq \volbp(G)+\volbp(G^*)$ & $\displaystyle \vol(\PP) \leq \volbp(G)+\volbp(G^*)$ \\ \\
\end{tabular}

\end{remark}

\bibliographystyle{amsplain} 
\providecommand{\bysame}{\leavevmode\hbox to3em{\hrulefill}\thinspace}
\providecommand{\MR}{\relax\ifhmode\unskip\space\fi MR }
\providecommand{\MRhref}[2]{%
  \href{http://www.ams.org/mathscinet-getitem?mr=#1}{#2}
}
\providecommand{\href}[2]{#2}


\end{document}